\newcommand{\nc}{\newcommand}
\newtheorem{thm}{Theorem}
\theoremstyle{plain}
\nc{\bthm}{\begin{thm}} \nc{\ethm}{\end{thm}}
\newtheorem{prop}[thm]{Proposition}
\nc{\bprp}{\begin{prop}} \nc{\eprp}{\end{prop}}
\newtheorem{fact}[thm]{Fact}
\nc{\bfct}{\begin{fact}} \nc{\efct}{\end{fact}}
\newtheorem{prob}[thm]{Problem}
\nc{\bprb}{\begin{prob}} \nc{\eprb}{\end{prob}}
\newtheorem{lem}[thm]{Lemma}
\nc{\blem}{\begin{lem}} \nc{\elem}{\end{lem}}
\newtheorem{claim}[thm]{Claim}
\nc{\bclm}{\begin{claim}} \nc{\eclm}{\end{claim}}
\newtheorem{cor}[thm]{Corollary}
\nc{\bcor}{\begin{cor}} \nc{\ecor}{\end{cor}}
\newtheorem{conj}[thm]{Conjecture}
\nc{\bcnj}{\begin{conj}} \nc{\ecnj}{\end{conj}}
\theoremstyle{definition}
\newtheorem{defn}[thm]{Definition}
\nc{\bdfn}{\begin{defn}} \nc{\edfn}{\end{defn}}
\newtheorem{observation}[thm]{Observation}
\nc{\bobs}{\begin{observation}} \nc{\eobs}{\end{observation}}
\theoremstyle{remark}
\newtheorem{rem}[thm]{Remark}
\nc{\brem}{\begin{rem}} \nc{\erem}{\end{rem}}
\newtheorem{cnv}[thm]{Convention}
\nc{\bcnv}{\begin{cnv}} \nc{\ecnv}{\end{cnv}}
\newtheorem{exam}[thm]{Example}
\nc{\bexm}{\begin{exam}} \nc{\eexm}{\end{exam}}
\nc{\bpf}{\begin{proof}} \nc{\epf}{\end{proof}}
\nc{\be}{\begin{enumerate}}
	\nc{\ee}{\end{enumerate}}
\nc{\bi}{\begin{itemize}}
	\nc{\itm}{\item}
	\nc{\ei}{\end{itemize}}
\nc{\invlim}{\lim_{\leftarrow}}
\nc{\dirlim}{\lim_{\rightarrow}}
\nc{\mm}{\mathbf{m}}
\nc{\FF}{\mathcal{F}}
\title{Cohomological properties of absolute Galois groups and their profinite completion.}
\author{Tamar Bar-On}
\date{\today}
\begin{document}
	
	\maketitle
	\begin{abstract}
We prove that several properties of absolute Galois groups are preserved under a profinite completion. 
	\end{abstract}
\section*{Introduction}
	Profinite groups were first introduced in 1928 by Wolfgang Krull, who proved that regarded as a topological group with the Krull topology, every Galois group of a field extension is compact, Hausdorff and totally disconnected,  (\cite{krull1928galoissche}). In the 1950s Horst Leptin proved that every profinite group can be realized as the Galois group of some field extension (\cite{leptin1955darstellungssatz}). One of the main questions in Galois theory these days is computing the absolute Galois groups of fields, and of main importance, the field of rational numbers, $\mathbb{Q}$ (see, for example, \cite{girondo2007note}, \cite{bauer2015faithful} and \cite{bary2015sylow}). The opposite direction of this question is asking to identify absolute Galois groups among all profinite groups. Several results are already known: for example, the only finite group which can be realized as an absolute Galois group is $C_2$. In addition, for every infinite cardinal $\mm$, the free profinite group of rank $\mm$ is isomorphic to the absolute Galois group of $F(t)$, were $F$ is an algebraically closed field of cardinality $\mm$ (\cite{douady1964determination}). However, the known cases are just a drop in the ocean, and determining whether a given profinite group can be realized as an absolute Galois group is still considered as an open question. A lot of effort is devoted to find restrictions on the possible structure and properties of an absolute Galois group. Recall that the absolute Galois group of a field $F$ comes equipped with a natural $\bar{F}$-module structure, and its special submodules $\mu_p(F)$, the set of all $p$ -routes of unity, which are tightly connected to the cohomological groups of $G_F=\operatorname{Gal}(\bar{F}/F)$. Hence, most of these restrictions regard the structure of modules over $G$ and the properties of cohomology groups of $G$ over specific modules. In order to make things a bit simpler, we often assume that the base field $F$ contains all $p$ -roots of unity. Thus, the natural action of $G_F$ on $\mu_p(F)\cong \mathbb{F}_p$ is trivial. One should notice that if $G_F=\operatorname{Gal}(\bar{F}/F)$ is the absolute Galois group of $F$, and $H\leq_cG$ is a closed subgroup of $G$, then $H=\operatorname{Gal}(\bar{F}/K)$ where $K=\bar{F}^H$ denotes the field of fixed points of $H$. Thus, $H$ is isomorphic to an absolute Galois group as well. Moreover, if $F$ contains all $p$-roots of unity then so does $K$. As a result, if $\mathcal{F}$ is a property characterizes absolute Galois groups, then we will always be interested in the property $\mathcal{F}'$ which says that "every closed subgroup of $G$ satisfies $\mathcal{F}$".  
	
We want to investigate the connection between $G$ and $\hat{G}$ having the required properties of absolute Galois groups. 
\begin{defn}[\cite{ribes2000profinite}, Section 3.2]
	Let $G$ be an abstract group. The profinite completion of $G$, denoted by $\hat{G}$, is a profinite group equipped with a homomorphism $i:G\to \hat{G}$ which satisfies the following universal property: for every homomorphism $f:G\to H$ where $H$ is a profinite groups, there exits a unique continuous homomorphism $\hat{f}:\hat{G}\to H$ which makes the following diagram commutative:
		\[
	\xymatrix@R=14pt{ \hat{G}\ \ar[rd]^{\hat{f}}& \\
	G\ \ar[u]_{i} \ \ar[r]_f& H\\
	}
	\]
	
\end{defn}
Let $G$ be a profinite group. Regarded as an abstract group, $G$ also possess a profinite completion. Despite looking a bit confusing in the first place, a profinite group does not necessarily equal to its profinite completion. For a counterexample, one may look, for example, at \cite[Example 4.2.12]{ribes2000profinite}. A profinite group which equals to its profinite completion is called \textit{strongly complete}. This is equivalent to the condition that every subgroup of finite index is open.

One of the main theorems in the whole area of profinite groups is the celebrated solution of Segal of Nikolov to Serre's conjecture, which states that every finitely generated profinite group is strongly complete (\cite{nikolov2007finitely}).

In fact, there are very "few" profinite groups which are strongly complete, and these are only the "small" profinite groups. This is due to the following proposition (\cite{smith2003subgroups}):
\begin{prop}
	Let $G$ be a profinite group, then the following are equivalent:
	\begin{itemize}
		\item $G$ is strongly complete.
		\item For every natural number $n$, $G$ has only finite number of subgroups of index $n$.
	\end{itemize}
\end{prop}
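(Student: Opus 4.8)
The plan is to prove the two implications separately; the real content is that an abundance of subgroups of a single fixed index produces a non-open finite-index subgroup, and conversely. I will freely use three elementary facts: a finite-index subgroup of a profinite group is open iff it is closed iff it contains an open subgroup; the normal core $H_{G}=\bigcap_{g\in G}gHg^{-1}$ of a finite-index subgroup $H$ again has finite index, and $H$ is open iff $H_{G}$ is; and property (2) is inherited by open subgroups (a subgroup of index $m$ in an open $U\le G$ has index $m[G:U]$ in $G$, and distinct subgroups of $U$ stay distinct in $G$) and by quotients by closed normal subgroups.

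For $(1)\Rightarrow(2)$, assume $G$ is strongly complete and, towards a contradiction, that for some $n$ it has infinitely many subgroups of index $n$. By strong completeness they are all open, hence closed, so their intersection $W$ is closed and normal; $G/W$ is again strongly complete and still has infinitely many subgroups of index $n$, now with trivial intersection. Passing to $G/W$ and then to normal cores (each of index at most $n!$; still infinitely many distinct ones, since a finite group has only finitely many subgroups; and, after grouping by isomorphism type, all equal to one fixed finite group $Q$), I get a closed embedding $G\hookrightarrow\prod_{i\in\mathbb{N}}Q$ with each coordinate projection $p_{i}\colon G\twoheadrightarrow Q$ onto and infinitely many of them with distinct kernels. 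Fix a non-principal ultrafilter $\mathcal{U}$ on $\mathbb{N}$; the pointwise limit $q_{\mathcal{U}}(g):=\lim_{\mathcal{U}}p_{i}(g)$ is a homomorphism $G\to Q$ whose kernel has finite index, and the point is that $\ker q_{\mathcal{U}}$ is not open. If it contained an open subgroup $V$, then only finitely many of the distinct open subgroups $\ker p_{i}$ could contain $V$, and one looks for an element of $V$ on which $q_{\mathcal{U}}$ is nontrivial. Carrying this out — ruling out that the $\mathcal{U}$-limit degenerates on $V$, perhaps after choosing $\mathcal{U}$ or the embedding with care — is the first step that requires real work.

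For $(2)\Rightarrow(1)$, assume (2) and suppose $G$ is not strongly complete. Among all pairs $(G',R)$ with $G'\le G$ open and $R$ a finite group admitting a surjective discontinuous homomorphism $G'\twoheadrightarrow R$ (such pairs exist, e.g. $G'=G$ and $R=G/H_{G}$ for a non-open finite-index $H$), choose one with $|R|$ least. If $R$ had a proper nontrivial normal subgroup $M$, then $G'\twoheadrightarrow R\twoheadrightarrow R/M$ would be continuous by minimality, its kernel $G''$ would be open in $G$, and $G''\twoheadrightarrow M$ would be discontinuous with $|M|<|R|$ — a contradiction. Hence $R=:T$ is simple. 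Replacing $G$ by $G'$, which still satisfies (2), I am reduced to a discontinuous surjection $\pi\colon G\twoheadrightarrow T$ with $T$ finite simple and $N:=\ker\pi$ dense in $G$ (the closure of $N$ is normal in $G$ and maps onto a normal subgroup of $T$, hence onto $\{1\}$ or $T$; the first would force the closure to equal $N$, impossible as $N$ is not closed, so the closure is all of $G$). The same reasoning shows $\pi|_{U}\colon U\twoheadrightarrow T$ is a discontinuous surjection for every open normal $U\le G$. It remains to extract from this infinitely many subgroups of one fixed index, contradicting (2); this is the heart of the matter.

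When $T=C_{p}$, the subgroups of index $p$ correspond bijectively to the hyperplanes of the $\mathbb{F}_{p}$-vector space $G/\langle[G,G],\,g^{p}\ (g\in G)\rangle$, so (2) at index $p$ makes that space finite-dimensional; one then feeds this back — applying (2) at the larger indices coming from the successive terms of the chain obtained by iterating $E\mapsto\langle[E,E],\,x^{p}\ (x\in E)\rangle$, and ``twisting'' $\pi$ by continuous $C_{p}$-characters whenever there are infinitely many of them — to force $N$ open after all. When $T$ is non-abelian simple there is no such linear device, and a discontinuous surjection onto $T$ is essentially the datum of a dense finite-index subgroup of a Cartesian product of copies of $T$; I expect both cases to rest ultimately on the Nikolov--Segal theorem quoted above (and its consequences for verbal subgroups and for Cartesian products of finite simple groups), and I expect the non-abelian simple case to be the main obstacle.
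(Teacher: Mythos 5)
First, a point of reference: the paper does not prove this proposition at all; it is quoted as a known result with a citation to \cite{smith2003subgroups}, so there is no in-paper argument to measure your attempt against. Your two opening reductions are correct and are the right first moves: for $(1)\Rightarrow(2)$, the passage from infinitely many index-$n$ subgroups to infinitely many open normal subgroups with one fixed finite quotient $Q$ and hence to a closed subgroup of $Q^{\mathbb{N}}$ with pairwise distinct coordinate kernels; for $(2)\Rightarrow(1)$, the minimal-counterexample argument reducing to a discontinuous surjection $\pi\colon G\twoheadrightarrow T$ with $T$ finite simple and $\ker\pi$ dense. But in both directions you stop exactly where the mathematical content lies, and you say so yourself; as written this is a strategy outline rather than a proof.

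Concretely, there are two genuine gaps. In $(1)\Rightarrow(2)$ you never show that $\ker q_{\mathcal{U}}$ is non-open; a priori $q_{\mathcal{U}}$ could be trivial or coincide with a continuous homomorphism. The usual repair is a counting argument: $G/W$ is countably based, hence has only countably many open subgroups, so it suffices to produce uncountably many distinct kernels $\ker q_{\mathcal{U}}$ as $\mathcal{U}$ ranges over non-principal ultrafilters; but distinct ultrafilters need not give distinct kernels for an arbitrary family of distinct $N_i$, so one must first refine $\{N_i\}$ to an independent subfamily (a Goursat-type step, easiest after passing to simple quotients of $Q$), which your sketch does not address. In $(2)\Rightarrow(1)$ the heart of the matter --- extracting infinitely many subgroups of one fixed index from a dense normal $N$ with $G/N\cong T$ simple --- is missing in both cases. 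Even for $T=C_p$, hypothesis (2) only gives that $[E,E]E^p$ has \emph{finite index} in $E$, not that it is closed; if at some stage it is dense of index greater than $1$, your iteration $E\mapsto\overline{[E,E]E^p}$ stabilizes without producing either infinitely many index-$p$ subgroups or an open $N$, and ruling this out (closedness of verbal subgroups of finite index) is precisely the hard point. For non-abelian $T$ you offer no argument, and this case really does rest on the Saxl--Wilson / Mart\'inez--Zelmanov results on Cartesian products of finite simple groups (or on the Nikolov--Segal machinery), which would have to be invoked explicitly. So the proposal maps the terrain correctly but leaves both essential steps unproved.
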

Thus, many absolute Galois groups are nonstrongly complete, such as $G_{\mathbb{Q}}$ and all free profinite groups of infinite rank.

In \cite{bar-on_2021} the author proved the following:
\begin{thm}
	Let $G$ be a nonstrongly complete profinite projective group. Then, $\hat{G}$ is profinitely projective.
\end{thm}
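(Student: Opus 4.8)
I would establish projectivity of $\hat{G}$ through embedding problems, transporting solvability from $G$ to $\hat{G}$ along the canonical map $i\colon G\to\hat{G}$ and exploiting that $i(G)$ is dense in $\hat{G}$. Note first that if $G$ is strongly complete then $\hat{G}=G$ and there is nothing to prove, so the real content is the case $\hat{G}\neq G$; the argument below is uniform.

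Recall that a profinite group $H$ is projective if and only if every finite embedding problem for $H$ has a weak solution, and that it suffices to treat embedding problems whose kernel is an elementary abelian $p$-group. So let $\pi\colon A\twoheadrightarrow B$ be an epimorphism of finite groups with $A_{0}:=\ker\pi$ elementary abelian, and let $\varphi\colon\hat{G}\twoheadrightarrow B$ be a continuous epimorphism; we must produce a continuous $\psi\colon\hat{G}\to A$ with $\pi\psi=\varphi$. Since $i(G)$ is dense and $B$ is finite, $\varphi\circ i\colon G\to B$ is an epimorphism of abstract groups, realising $B$ as $G/N$ with $N:=\ker(\varphi\circ i)$ of finite index. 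The key leverage is: \emph{any} abstract homomorphism $\psi_{0}\colon G\to A$ with $\pi\psi_{0}=\varphi\circ i$ extends, by the universal property of $\hat{G}$, to a continuous $\widehat{\psi_{0}}\colon\hat{G}\to A$, and then $\pi\widehat{\psi_{0}}$ and $\varphi$ are continuous maps into the finite set $B$ agreeing on the dense set $i(G)$, hence equal; so $\widehat{\psi_{0}}$ is the required weak solution. Thus the theorem reduces to the following statement about $G$ alone: \emph{for every finite-index normal subgroup $N\trianglelefteq G$ and every epimorphism $A\to G/N$ of finite groups with elementary abelian kernel, the quotient map $G\to G/N$ lifts to an abstract homomorphism $G\to A$.}

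The difficulty is that $N$ need not be open, so $G\to G/N$ need not be continuous and projectivity of $G$ cannot be invoked directly; the obstruction to the lifting lies in the \emph{abstract} group cohomology $H^{2}_{\mathrm{abs}}(G,A_{0})$ — which in general does not vanish — rather than in the continuous cohomology, which does. The plan is to strip off the part on which the topology survives. Put $U:=\overline{N}$, the closure of $N$: it is an \emph{open} normal subgroup of $G$ in which $N$ is dense and of finite index, and the induced map $G\to G/U$ \emph{is} continuous. Projectivity of $G$ then solves the (continuous) embedding problem obtained by composing $A\to G/N$ with $G/N\to G/U$, giving a continuous partial lift over $G/U$; one then has to correct it inside the finite group $U/N$, which is a $1$-cocycle-lifting problem along a homomorphism with abelian kernel. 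A devissage of this kind — using that open subgroups of $G$ again inherit projectivity, and transfer arguments for the prime-to-$p$ part of the relevant $H^{2}_{\mathrm{abs}}$ — should reduce the general case to the case where $N$ is \emph{dense} in $G$. That dense-kernel case is, I expect, the crux and the main obstacle: a discontinuous, dense-kernel epimorphism $f\colon G\to B$ from a projective profinite group must be lifted along an arbitrary finite epimorphism $A\to B$. The leverage available there is that density of $\ker f$ forces $(\ker f)W=G$ for every open $W\trianglelefteq G$, so that $\ker f$ surjects onto every finite continuous quotient of $G$ and, correspondingly, $\ker(\hat{G}\to G)$ surjects onto $B$; and that $G$, being projective and non-strongly-complete, is infinitely generated and embeds as a closed subgroup of a free profinite group. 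I would try to exploit these — notably by showing $\ker(\hat{G}\to G)$ is itself projective and analysing the extension $1\to\ker(\hat{G}\to G)\to\hat{G}\to G\to 1$ — to force the dense-kernel obstruction to vanish.

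Finally, a structural remark simplifying the reduction: since $G$ is projective it is a retract, in the category of profinite groups, of a free profinite group $\hat{F}$, and applying the (functorial) profinite completion turns the splitting $G\to\hat{F}\to G$ into a splitting $\hat{G}\to\widehat{\hat{F}}\to\hat{G}$ with composite $\mathrm{id}_{\hat{G}}$; as a retract of a projective profinite group is projective, it suffices to prove the theorem for a free profinite group $\hat{F}$ (necessarily of infinite rank in the non-trivial case). This does not dissolve the obstacle above, but it concentrates it in a single explicit group, whose finite-index subgroups are comparatively well understood.
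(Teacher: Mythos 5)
First, a remark on the comparison itself: the present paper does not prove this theorem — it is quoted from \cite{bar-on_2021} — so there is no in-text proof to match against; but the paper's general machinery (Lemma \ref{locally_closed}, Proposition \ref{key tool}) is built precisely for statements of this shape and indicates the intended argument. Your opening reduction is correct and is the same one the paper uses elsewhere: a weak solution of a finite embedding problem for $\hat{G}$ is equivalent, via the universal property of $i\colon G\to\hat{G}$ and density of $i(G)$, to an \emph{abstract} lifting $G\to A$ of the (possibly discontinuous) epimorphism $\varphi\circ i\colon G\to B$. The retract-of-a-free-profinite-group remark is also sound. The genuine gap is exactly where you say the crux lies: the case of a finite-index, non-open kernel $N$ is never actually handled. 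The devissage through $U=\overline{N}$ followed by a correction in $H^2_{\mathrm{abs}}(G,A_0)$ ends with ``I would try to exploit these\dots to force the dense-kernel obstruction to vanish,'' which is a statement of intent, not an argument; and the tools you name (transfer on the prime-to-$p$ part of $H^2_{\mathrm{abs}}$, analysis of $1\to\ker(\hat{G}\to G)\to\hat{G}\to G\to 1$) give no visible handle on the abstract second cohomology of an infinitely generated profinite group.

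The missing idea is the local-to-global argument this paper applies to every other property. For each finitely generated closed subgroup $H\leq_c G$, the restriction $(\varphi\circ i)|_H$ is automatically continuous by Nikolov--Segal; $H$ is projective (a closed subgroup of a projective group is projective), so the set $\mathcal{C}_H$ of continuous homomorphisms $H\to A$ lifting $(\varphi\circ i)|_H$ through $\pi$ is nonempty, and it is finite because $\operatorname{Hom}(H,A)$ is finite for $H$ finitely generated and $A$ finite. Restriction of the domain makes $\{\mathcal{C}_H\}$ an inverse system of nonempty finite sets over the directed family of finitely generated closed subgroups, whose union is all of $G$; by compactness the inverse limit is nonempty, and any element of it is an abstract lifting $G\to A$, which then completes to the required continuous weak solution $\hat{G}\to A$. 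This is Proposition \ref{key tool} with $U=G$, and it bypasses the dense-kernel difficulty entirely. Without this (or an equivalent) step, your proposal is a plausible plan but not a proof.
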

By \cite{fried2006field} Theorem 11.6.2 and Corollary 23.1.22, a profinite group is projective if and only if it can be realized as the absolute Galois group of a pseudo algebraically closed field. So, this gives a family of examples of absolute Galois groups which their profinite completion can be realized as an absolute Galois group as well. Notice that this also implies a cohomological connection between a profinite group and its profinite completion, due to the fact that being projective is equivalent to cohomological dimension $\leq 1$.

The goal of this paper is to prove that for some of known cohomological properties of absolute Galois groups, the family of profinite groups which satisfy this property is closed under taking a profinite completion.  
\section{Main Results}
In this section we review some of the known properties of absolute Galois groups, and show that they are preserved under profinite completion. But first we need a few lemmas that will be used repetitively in the proofs. 
\begin{lem}\label{locally_closed}
	Let $H$ be a finite index subgroup of a profinite group $G$. Then $H$ is locally closed. I.e, for every $x_1,..,x_n\in H$, $K=\overline{\langle x_1,...,x_n \rangle}\subseteq H$.
\end{lem}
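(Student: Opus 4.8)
The plan is to reduce the statement to the Nikolov--Segal theorem quoted above. First I would set $K=\overline{\langle x_1,\dots,x_n\rangle}$, with the closure taken inside $G$. Since $G$ is profinite and $K$ is a closed subgroup of $G$, $K$ is itself a profinite group, and by construction it is topologically generated by the finite set $\{x_1,\dots,x_n\}$; hence $K$ is a (topologically) finitely generated profinite group, so the Nikolov--Segal theorem applies to it.

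Next I would estimate the index of $H\cap K$ in $K$. The left cosets of $H\cap K$ in $K$ inject into the left cosets of $H$ in $G$, so $[K:H\cap K]\le [G:H]<\infty$. Thus $H\cap K$ is a finite-index subgroup of the finitely generated profinite group $K$, and by strong completeness of $K$ (equivalently, because finite-index subgroups of strongly complete profinite groups are open) the subgroup $H\cap K$ is open, hence closed, in $K$.

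Finally I would conclude by a density argument. Each $x_i$ lies in $H$ (by hypothesis) and in $K$, so $\langle x_1,\dots,x_n\rangle\subseteq H\cap K$. Because $K$ is closed in $G$, the closure of $\langle x_1,\dots,x_n\rangle$ in $G$ agrees with its closure in $K$, namely all of $K$. Since $H\cap K$ is a closed subset of $K$ containing the dense subgroup $\langle x_1,\dots,x_n\rangle$, we get $K\subseteq H\cap K\subseteq H$, which is exactly the assertion $\overline{\langle x_1,\dots,x_n\rangle}\subseteq H$.

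The only non-elementary ingredient, and the point where all the real content lies, is the appeal to Nikolov--Segal: without the strong completeness of finitely generated profinite groups there is no reason for $H\cap K$ to be closed in $K$, and the lemma would genuinely fail (a dense finite-index subgroup of such a $K$ would be a counterexample). Everything else is a one-line index count and basic point-set topology, so I do not anticipate any obstacle beyond correctly setting up $K$ and verifying that $[K:H\cap K]$ is finite.
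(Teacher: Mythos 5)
Your proof is correct and follows the same route as the paper: form $K=\overline{\langle x_1,\dots,x_n\rangle}$, use Nikolov--Segal to see that the finite-index subgroup $H\cap K$ is open (hence closed) in $K$, and conclude by density that $K\subseteq H$. You simply spell out the index estimate and the closure argument in a little more detail than the paper does.
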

\begin{proof}
	Let $x_1,...,x_n\in H$. Look at $K=\overline{\langle x_1,...,x_n \rangle}$, the closed subgroup generated by $x_1,..,x_n$. $K\cap H$ is a finite index subgroup of $K$. Since $K$ is a finitely generated profinite group, by Segal\&Nikolov theorem $K$ is strongly complete, so $K\cap H$ is open in $K$. But $K\cap H$ contains the generators $x_1,...,x_n$ of $K$, so $K\subseteq K\cap H\Rightarrow K\subseteq H$.
\end{proof}
The following generalization is not needed for this paper, but it is also interesting to mention out:
\begin{lem}
	Let $H$ be a finite index normal subgroup of a profinite group $G$. Then $H$ is locally closed-normal.
\end{lem}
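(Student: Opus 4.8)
Throughout I read ``$H$ is locally closed--normal'' as the natural strengthening of Lemma~\ref{locally_closed} that normality should permit: for all $x_1,\dots,x_n\in H$ the closed normal subgroup $\overline N:=\overline{\langle\, x_i^{\,g}\,:\,g\in G,\ 1\le i\le n\,\rangle}$ of $G$ generated by $x_1,\dots,x_n$ is contained in $H$. Write $N=\langle x_i^{\,g}\rangle$ for the abstract normal closure; then $N\subseteq H$ since $H$ is a subgroup and $H\trianglelefteq G$, and $\overline N$ is the closure of $N$.

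First I would record a coarse approximation. For every open normal $V\trianglelefteq G$ the subgroup $HV$ is open and normal and contains each $x_i$, so the (finite) normal closure of the images of the $x_i$ in $G/V$ lies in $HV/V$; hence $NV\subseteq HV$ for all $V$, and intersecting gives $\overline N\subseteq\bigcap_V HV=\overline H$. This already settles the case $H$ closed, and in general it reduces the problem to the case $H$ dense: if $\overline H\ne G$, then $\overline H$ is a proper open subgroup with $H\trianglelefteq\overline H$, the lemma applied to the pair $(\overline H,H)$ gives that the closed normal closure $\overline N_{\overline H}$ of the $x_i$ inside $\overline H$ lies in $H$, and $\overline N_{\overline H}$ --- a closed subgroup normalised by the finite-index subgroup $\overline H\trianglelefteq G$ --- has only finitely many $G$-conjugates $P_1,\dots,P_t$, each normal in $\overline H$ and each contained in $H$; their set-product $P_1\cdots P_t$ is then a closed subgroup equal to $\overline N$ and contained in $H$, as $H$ is a subgroup. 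So I may assume $H$ is dense in $G$.

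When $H$ is dense the topology alone sees only $\overline N\subseteq\overline H=G$, and moreover there is now no closed normal subgroup of $G$ strictly between $H$ and $G$, so the reduction device just used is exhausted: a genuinely different argument is needed here, and this is the main obstacle. The plan is to route through a finitely generated subgroup. I would choose $g_1,\dots,g_k\in G$ whose images generate $G/H$ and set $R:=\overline{\langle\, x_1,\dots,x_n,g_1,\dots,g_k\,\rangle}$; by the Nikolov--Segal theorem~\cite{nikolov2007finitely}, $R$ is strongly complete, so $R\cap H$ is open (hence closed) in $R$ and $G=RH$. Writing a typical conjugate $x_i^{\,g}$ with $g=rh$ ($r\in R$, $h\in H$) as $(x_i^{\,r})^{h}$, with $x_i^{\,r}\in R\cap H$, one gets that $\overline N$ is the closed normal closure in $G$ of the closed subgroup $\overline P:=\overline{\langle\, x_i^{\,r}\,:\,r\in R,\ 1\le i\le n\,\rangle}\subseteq R\cap H\subseteq H$, and, using $G=RH$ and $\overline P\trianglelefteq R$, that in fact $\overline N=\overline{\langle\,\overline P^{\,h}\,:\,h\in H\,\rangle}$ with every conjugate $\overline P^{\,h}$ contained in $H$.

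The remaining step --- upgrading ``each $\overline P^{\,h}$ lies in $H$'' to ``the closed subgroup they generate lies in $H$'' --- is the crux. One cannot finish by invoking strong completeness again, since neither $\overline P$ nor $\overline N$ need be topologically finitely generated. Instead I expect to need a Nikolov--Segal--type width estimate, applied to the finitely generated group $R$ and the closed normal subgroup $\overline P\trianglelefteq R$: that modulo every open normal $V\trianglelefteq G$ the normal closure of $\overline P$ in $G/V$ is a product of a bounded number (depending only on $n$ and $[G:H]$) of conjugates $\overline P^{\,h}$, $h\in H$. Granting this, the set of all such products is a continuous image of a compact set, hence closed and contained in $H$; letting $V$ shrink would then give $\overline N\subseteq H$. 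Establishing this uniform bound is, to my mind, the part of the proof where the real work lies.
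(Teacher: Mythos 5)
Your reading of ``locally closed--normal'' agrees with the paper's, and your two reductions (first to the case that $H$ is dense in $G$, then to controlling the closed subgroup generated by the conjugates $\overline P^{\,h}$ of $\overline P\subseteq R\cap H$) are correct as far as they go. But the proof is not complete: you explicitly defer the crux --- upgrading ``each $\overline P^{\,h}$ lies in $H$'' to ``the closed subgroup they generate lies in $H$'' --- to an unestablished ``Nikolov--Segal--type width estimate,'' and that estimate is not a routine citation. The Nikolov--Segal width theorems bound verbal and normal-closure widths in terms of the number of topological generators of the ambient group; here the ambient group $G$ (equivalently, its finite quotients $G/V$) is arbitrary and not boundedly generated, and the normal closure of a subgroup in a finite group may require unboundedly many conjugates. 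So the step where, as you say, ``the real work lies'' is a genuine gap, and it is not clear it can be closed along these lines.

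The idea you are missing is that one need not control the whole closed normal closure $\overline N$ at once: it suffices to exclude each element $y\notin H$ separately, and a single extra element can always be absorbed into a finitely generated closed subgroup. Concretely, $\overline N=\bigcap U$, the intersection running over all open normal subgroups $U\unlhd_o G$ containing $x_1,\dots,x_n$ (every closed normal subgroup is the intersection of the open normal subgroups containing it). If some $y\in\overline N\setminus H$ existed, set $K=\overline{\langle x_1,\dots,x_n,y\rangle}$; by Nikolov--Segal \cite{nikolov2007finitely} $K$ is strongly complete, so the finite-index normal subgroup $K\cap H$ of $K$ is open in $K$, and by \cite[Lemma 1.2.5(b)]{fried2006field} there is an open normal subgroup $O\unlhd_o G$ with $K\cap O=K\cap H$. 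Then $O$ contains $x_1,\dots,x_n$, hence $y\in O$, hence $y\in K\cap O=K\cap H\subseteq H$, a contradiction. This elementwise argument replaces your second and third paragraphs entirely and avoids any width consideration.
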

\begin{proof}
	Let $x_1,...,x_n\in H$. We would like to show that the closed normal closure of $\{x_1,...,x_n\}$, i.e. the minimal closed normal subgroup that contains $x_1,..,x_n$, is contained in $H$. This group equals to $\bigcap U$ for all $x_1,...,x_n\in U\unlhd_oG.$ Suppose not, then there is an element $y\in (\bigcap U)\setminus H$. Let us look at $K=\overline{\langle x_1,..,x_n, y\rangle }$. $H\cap K$ is a normal finite index subgroup of $K$ that contains $x_1,..,x_n$. Since $K$ is f.g, it is strongly complete, so $H\cap K$ is open in $K$. By \cite[Lemma 1.2.5(b)]{fried2006field}, there exist an open normal subgroup $O\unlhd_oG$ such that $K\cap H=K\cap O$. In particular, $O$ is an open normal subgroup of $G$ that contains $x_1,...,x_n$. Since $y$ belongs to the intersection of all such subgroups, $y\in O$. But $y\in K$ implies that $y\in K\cap O=K\cap H$. Thus, $y\in H$. A contradiction. 
\end{proof}
Recall that, for every abstract residually finite group $G$, there is a one-to-one correspondence between the open subgroups of $\hat{G}$ and the finite index subgroups of $G$, obtained by $U\to \bar{U}$ where $U$ denotes a finite-index subgroup of $G$, and $\bar{U}$ denotes its topological closure in $\hat{G}$ (\cite[Proposition 3.2.2]{ribes2000profinite}). Moreover, since every finite-index subgroup of $U$ is also finite index in $G$, $\bar{U}\cong \hat{U}$ (\cite[Proposition 3.2.5+3.2.6]{ribes2000profinite}). We also need the following Lemma:
\begin{lem}\label{from_closed_to_open}
	Let $G$ be a profinite group, $H$ a closed subgroup of $G$, and $\varphi:H\to A$ a continuous homomorphism to a finite group. Then $\varphi$ can be lifted to some open subgroup $U$ of $G$. 
\end{lem}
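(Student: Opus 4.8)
The plan is to extend $\varphi$ to the open subgroup $W:=UH$ of $G$, for a suitably small open normal subgroup $U\unlhd_o G$. First I would observe that $N:=\ker\varphi$ is open in $H$: since $A$ is finite and $\varphi$ is continuous, $N$ is the preimage of $\{1\}$ under a continuous map to a discrete group, hence open in $H$, and $\varphi$ factors as $H\twoheadrightarrow H/N\cong\varphi(H)\hookrightarrow A$. Because the sets $U\cap H$ with $U\unlhd_o G$ form a fundamental system of neighbourhoods of $1$ in $H$, and $N$ is such a neighbourhood, I can fix $U\unlhd_o G$ with $U\cap H\subseteq N$.

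Next, set $W:=UH$. Since $U$ is open we have $W=\bigcup_{h\in H}Uh$ open in $G$, and since $U$ is normal in $G$ and $H$ is a subgroup, $W$ is a subgroup; hence $W$ is an open (thus closed, finite-index) subgroup of $G$ containing $H$. The inclusion $H\hookrightarrow W$ induces a surjection $H\to W/U$ with kernel $H\cap U$, i.e.\ an isomorphism $H/(H\cap U)\xrightarrow{\ \sim\ }W/U$. I would then define $\tilde\varphi\colon W\to A$ as the composite
\[
W\ \twoheadrightarrow\ W/U\ \xrightarrow{\ \sim\ }\ H/(H\cap U)\ \twoheadrightarrow\ H/N\ \cong\ \varphi(H)\ \hookrightarrow\ A ,
\]
where the middle surjection is legitimate precisely because $H\cap U\subseteq N$; explicitly $\tilde\varphi(uh)=\varphi(h)$ for $u\in U$, $h\in H$.

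It then remains to check the routine points: $\tilde\varphi$ is well defined (if $u_1h_1=u_2h_2$ then $h_2h_1^{-1}=u_2^{-1}u_1\in U\cap H\subseteq\ker\varphi$); it is a homomorphism (conjugating factors of $U$ past elements of $H$, using normality of $U$ in $G$); it is continuous (its kernel contains the open subgroup $U$); and $\tilde\varphi|_H=\varphi$ (write $h=1\cdot h$). I do not expect a genuine obstacle here: the single idea driving the argument is to pass to $W=UH$ with $U$ \emph{normal}, which simultaneously makes $W$ a subgroup, makes it open, and forces $W/U$ to be a quotient of $H$; the only point deserving a moment's care is that $U$ may be taken normal in $G$, which is exactly what the topology of a profinite group guarantees.
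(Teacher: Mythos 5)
Your proof is correct and is essentially the standard argument that the paper merely cites (it points to part of the proof of Lemma 7.6.3 in \cite{ribes2000profinite}): since $\ker\varphi$ is open in $H$, choose $U\unlhd_o G$ with $U\cap H\subseteq\ker\varphi$ and extend $\varphi$ to the open subgroup $UH$ by $uh\mapsto\varphi(h)$. All the verifications you list --- well-definedness, multiplicativity via normality of $U$, continuity because the kernel contains $U$, and $\tilde\varphi|_H=\varphi$ --- go through exactly as you describe.
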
  
\begin{proof}
	This is part of the proof of Lemma 7.6.3 in \cite{ribes2000profinite}.
\end{proof}
During this paper, we are going to use intensively the celebrated solution to Serre's conjecture, which is also known as Nikolov\&Segal Theorem.
\begin{thm}\cite{nikolov2007finitely}
	Every finitely generated profinite group is strongly complete.
\end{thm}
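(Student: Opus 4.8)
Since this is the theorem of Nikolov and Segal \cite{nikolov2007finitely}, a complete proof is a major undertaking (it occupies two long papers), so I will only outline the architecture. The plan is to reduce the statement to uniform \emph{width} bounds for verbal subgroups of finite groups and then promote those bounds, via compactness, to a closedness statement. First I would make the routine reductions. A subgroup $H$ of finite index in a profinite group $G$ contains its normal core $H_G=\bigcap_{g\in G}H^{g}$, which is again of finite index and normal, and any subgroup containing an open subgroup is open; so it suffices to treat a normal subgroup $N\trianglelefteq G$ with $G/N$ finite, say of order $n$. Now $G/N$ lies in the variety generated by the single finite group $\prod_{|Q|\le n}Q$, which is finitely based by the Oates--Powell theorem; hence there are words $w_1,\dots,w_r$, depending only on $n$, with $N\supseteq W(G):=w_1(G)\cdots w_r(G)$. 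Thus it is enough to prove that every such verbal subgroup $W(G)$ is open in $G$.

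Assume $G$ is topologically generated by $d$ elements. Then $W(G)$ has finite index, because $G/\overline{W(G)}$ is a $d$-generated profinite group satisfying the laws $w_i\equiv 1$ and so is finite by Zelmanov's positive solution of the restricted Burnside problem together with local finiteness of the variety involved. It remains to show $W(G)$ is \emph{closed}. For $m\in\mathbb{N}$ let $P_m\subseteq G$ be the set of products of at most $m$ values of the words $w_i$; being a finite union of continuous images of powers of the compact group $G$, each $P_m$ is closed. So it suffices to find $m$ with $W(G)=P_m$, and for that it is enough that a single $m$, depending only on $d$ and the $w_i$, bounds the width of the verbal subgroup in every finite $d$-generated quotient $G/U$ (each finite quotient of $G$ being $d$-generated): if some $w\in W(G)$ lay outside the closed set $P_m$, an open normal $U$ would separate them, contradicting the fact that the image of $w$ in $G/U$ lies in $W(G/U)$, which coincides with the image of $P_m$ once $m$ exceeds that uniform width.

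The crux, and the step I expect to be by far the hardest, is establishing these \emph{uniform width bounds for finite $d$-generated groups}: concretely, (i) a function $f(d,q)$ such that in every $d$-generated finite group every element of $G^{q}$ (the subgroup generated by $q$-th powers) is a product of $f(d,q)$ $q$-th powers, and (ii) a function $g(d)$ such that whenever $G$ is generated by $d$ elements and $K\trianglelefteq G$ is the normal closure of $d$ elements, every element of $[K,G]$ is a product of $g(d)$ commutators $[k,x]$ with $k\in K$. Both are proved by induction along a chief series, reducing to (direct products of) finite simple groups; at that point the classification of finite simple groups becomes indispensable, since one needs quantitative facts about simple groups — bounded covering numbers by conjugacy classes, Ore-type factorisations, uniform generation — that are known only case-by-case. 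The combinatorial machinery (the ``acceptable subgroup'' bookkeeping of Nikolov--Segal) is what controls how these width bounds propagate through group extensions and thereby links the simple-group input to the reduction above.

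With such an $m$ in hand, the argument of the second paragraph makes each $W(G)$ closed, hence of finite index and closed, hence open; so $N$ is open, and therefore so is $H$. As $G$ and $H$ were arbitrary, every finitely generated profinite group is strongly complete.
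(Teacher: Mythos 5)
This statement appears in the paper only as a citation to Nikolov and Segal; the paper gives no proof of it, so there is nothing internal to compare your argument against. Judged on its own terms, your outline faithfully reproduces the architecture of the published proof: the reduction via the normal core and the Oates--Powell theorem to showing that verbal subgroups $W(G)=w_1(G)\cdots w_r(G)$ are open, the observation that finite index plus closedness gives openness, and the compactness argument that uniform width bounds in all finite $d$-generated quotients force $W(G)=P_m$ to be closed (via $W(G)\subseteq\bigcap_U P_mU=P_m$). These reductions are all correct as you state them. One small inaccuracy: for this particular application you do not need Zelmanov's solution of the restricted Burnside problem to see that $G/\overline{W(G)}$ is finite, since the variety generated by a single finite group $A$ is locally finite already by Birkhoff's elementary observation that the $d$-generated relatively free group in $\operatorname{var}(A)$ embeds in $A^{A^d}$; the RBP enters Nikolov--Segal's work in their more general results on verbal subgroups, not in the reduction you describe.

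The genuine gap --- which you candidly flag yourself --- is that the uniform width bounds (your items (i) and (ii)) are asserted rather than proved, and they are not a technicality: they constitute essentially the entire content of the two Nikolov--Segal papers, including the induction along a chief series, the ``acceptable subgroup'' machinery for passing width bounds through extensions, and the case-by-case quantitative input about finite simple groups that requires the classification. Without them the compactness argument has nothing to feed on, so what you have written is a correct roadmap to the theorem rather than a proof of it. For the purposes of this paper that is immaterial, since the theorem is used as a black box throughout (e.g.\ in Lemma \ref{locally_closed} and Proposition \ref{key tool}), exactly as you are using the width bounds as a black box here.
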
 
\begin{rem}
	Notice that this immediately implies that every abstract homomorphism from a finitely generated profinite group to an arbitrary profinite group is continuous. 
\end{rem}
In addition, we are going to use the following properties of profinite completion several times:
\begin{lem}\cite[Chapter 3.1]{ribes2000profinite}
	Let $G$ be an abstract group and $\hat{G}$ its profinite completion.
	\begin{enumerate}
		\item $i(G)$ is dense in $\hat{G}$.
		\item $i:G\to \hat{G}$ is one-to-one iff $G$ is residually finite.
	\end{enumerate}
\end{lem}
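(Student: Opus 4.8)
The plan is to deduce both parts directly from the universal property in the Definition above, together with two standard facts about profinite groups: a closed subgroup of a profinite group is itself profinite in the subspace topology, and the open normal subgroups of a profinite group intersect in the identity. I expect no genuine obstacle here — the statement is foundational — and the only thing to watch is non-circularity, i.e.\ being clear that everything rests on the universal property (equivalently, on the model of $\hat G$ as $\invlim G/N$ over the finite-index normal subgroups $N\unlhd G$) and nothing else.

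For (1), I would set $K=\overline{i(G)}\le\hat G$. This is a closed subgroup, hence profinite, and $i$ corestricts to a homomorphism $j\colon G\to K$ with $\iota\circ j=i$, where $\iota\colon K\hookrightarrow\hat G$ is the inclusion. Applying the universal property to $j$ yields a unique continuous $\hat j\colon\hat G\to K$ with $\hat j\circ i=j$. Then $\iota\circ\hat j\colon\hat G\to\hat G$ is continuous and $(\iota\circ\hat j)\circ i=\iota\circ j=i$; since $\mathrm{id}_{\hat G}$ has the same property and the universal property guarantees uniqueness of such a map, $\iota\circ\hat j=\mathrm{id}_{\hat G}$. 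Hence $\iota$ is surjective, so $K=\hat G$ and $i(G)$ is dense. (With the concrete inverse-limit model this is even more immediate: a basic open neighbourhood of a point of $\hat G$ is the preimage of a point of some $G/M$ under a canonical projection, and that preimage already contains $i(g)$ for any $g\in G$ lying in the corresponding coset of $M$.)

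For (2), suppose first that $G$ is residually finite and fix $1\neq g\in G$; choose a homomorphism $f\colon G\to F$ with $F$ finite and $f(g)\neq1$. The universal property produces a continuous $\hat f\colon\hat G\to F$ with $\hat f\circ i=f$, so $\hat f(i(g))=f(g)\neq1$ and therefore $i(g)\neq1$. As $g$ was arbitrary, $\ker i=\{1\}$, i.e.\ $i$ is injective. Conversely, suppose $i$ is injective and fix $1\neq g\in G$, so $1\neq i(g)\in\hat G$. Because the open normal subgroups of the profinite group $\hat G$ intersect in $\{1\}$, there is an open normal subgroup $U\unlhd_o\hat G$ with $i(g)\notin U$; then the composite of $i$ with the quotient map $\hat G\to\hat G/U$ is a homomorphism from $G$ into the finite group $\hat G/U$ whose value at $g$ is nontrivial. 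Hence $G$ is residually finite.
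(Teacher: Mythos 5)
Your proof is correct. Note that the paper itself gives no argument here --- the lemma is simply cited from \cite[Chapter 3.1]{ribes2000profinite} --- so the comparison is really with the standard reference. There, density of $i(G)$ is usually read off from the concrete model $\hat G=\invlim G/N$ (the image of $G$ meets every basic open set, as you observe in your parenthetical remark), whereas your main argument for (1) is the purely formal one: corestrict $i$ to $K=\overline{i(G)}$, invoke the universal property, and use uniqueness to force $\iota\circ\hat j=\mathrm{id}_{\hat G}$, hence surjectivity of the inclusion. This buys independence from the particular construction of $\hat G$ (any object satisfying the universal property works), at the cost of needing the standard fact that a closed subgroup of a profinite group is profinite. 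Your proof of (2) is the standard one in both directions; the only external input is that the open normal subgroups of a profinite group intersect trivially, which the paper itself records (via \cite[Theorem 2.1.3]{ribes2000profinite}) in the corollary immediately following this lemma, so nothing circular is going on. No gaps.
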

\begin{cor}
	By \cite[Theorem 2.1.3]{ribes2000profinite} the intersection of all open normal subgroups of a profinite group is trivial, which means that a profinite group is always residually finite. Thus, given a profinite group $G$, we can consider it as a dense subgroup of its profinite completion $\hat{G}$.
\end{cor}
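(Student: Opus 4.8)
The plan is to chain together the three ingredients already on the table: the cited structure theorem for profinite groups, the standard characterization of residual finiteness, and the two properties of the completion map $i$ recorded in the previous lemma. The corollary itself asserts essentially nothing beyond these, so the "proof" is really a matter of assembling them in the right order.

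First I would recall that in a profinite (hence compact) group $G$ every open subgroup has finite index: its cosets form an open cover of the compact space $G$, so there can be only finitely many of them. In particular every open normal subgroup of $G$ is a finite-index normal subgroup, and therefore the intersection of all finite-index normal subgroups of $G$ is contained in the intersection of all open normal subgroups of $G$. By \cite[Theorem 2.1.3]{ribes2000profinite} the latter intersection is trivial, hence so is the former. Since a group is residually finite precisely when the intersection of its finite-index normal subgroups is trivial, this shows that $G$, regarded as an abstract group, is residually finite.

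Finally I would invoke the previous lemma: part~(2) yields that $i\colon G\to\hat G$ is injective, and part~(1) yields that $i(G)$ is dense in $\hat G$. Identifying $G$ with its image $i(G)$ then exhibits $G$ as a dense subgroup of its own profinite completion, which is the assertion of the corollary. There is no genuine obstacle in this argument; the only step deserving a moment's attention is the implication "open normal $\Rightarrow$ finite-index normal", which is exactly the place where compactness of $G$ enters.
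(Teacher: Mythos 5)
Your argument is correct and follows essentially the same route as the paper, which treats this corollary as an immediate consequence of the cited theorem together with parts (1) and (2) of the preceding lemma. The one detail you add — that open (normal) subgroups of a compact group have finite index, so triviality of the intersection of open normal subgroups gives residual finiteness in the abstract sense — is a worthwhile clarification but does not change the argument.
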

Eventually, the following proposition is a key tool in our toolbox:
\begin{prop}\label{key tool}
	Let $\mathcal{C}$ be a property of functions from a group to a given finite group $A$ which preserved under reduction and union. Let $U$ be a finite index subgroup of some profinite group $G$. Assume that for all finitely generated closed subgroup $H$ of $U$ there is a continuous homomorphism $H\to A$ satisfying $\mathcal{C}$. Then there is an abstract homomorphism $U\to A$ satisfying $\mathcal{C}$.
\end{prop}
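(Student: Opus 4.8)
The plan is to build the map out of the homomorphisms the hypothesis supplies on finitely generated closed subgroups of $U$, taken over the directed family of all of them, and to make the choices coherent via an inverse limit.

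First I would let $\mathcal{P}$ be the collection of subgroups of the form $K=\overline{\langle x_1,\dots,x_n\rangle}$ with $x_1,\dots,x_n\in U$, the closure taken in $G$. By Lemma~\ref{locally_closed} each such $K$ is contained in $U$, so $\mathcal{P}$ is exactly the set of finitely generated closed subgroups of $G$ lying inside $U$; this is the only place the finite-index hypothesis on $U$ is used. The poset $(\mathcal{P},\subseteq)$ is directed, since $\overline{\langle \bar x\rangle}$ and $\overline{\langle \bar y\rangle}$ both lie inside $\overline{\langle \bar x,\bar y\rangle}\in\mathcal{P}$, and $\bigcup_{K\in\mathcal{P}}K=U$, since each $u\in U$ lies in $\overline{\langle u\rangle}\in\mathcal{P}$.

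Next, for $K\in\mathcal{P}$ set $P_K=\{\,\psi\colon K\to A \ \mid\ \psi \text{ is a continuous homomorphism satisfying }\mathcal{C}\,\}$. By hypothesis $P_K\neq\emptyset$, and the crucial point is that $P_K$ is \emph{finite}: a continuous homomorphism $K\to A$ has open kernel of index dividing $|A|$, and a finitely generated profinite group has only finitely many open subgroups of each index, so there are only finitely many possible kernels and hence only finitely many such homomorphisms. For $K\subseteq K'$ in $\mathcal{P}$, restriction sends $P_{K'}$ into $P_K$ — restriction preserves continuity, and $\mathcal{C}$ is preserved under reduction to a subgroup — so $\{P_K\}_{K\in\mathcal{P}}$ with these restriction maps is an inverse system of non-empty finite sets over a directed poset. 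Hence $\varprojlim_{K\in\mathcal{P}}P_K\neq\emptyset$ (finite discrete spaces are compact Hausdorff, so this is the standard compactness argument), and I fix a coherent family $(f_K)_{K\in\mathcal{P}}$ in it, i.e. with $f_{K'}|_K=f_K$ whenever $K\subseteq K'$.

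Finally I would define $f\colon U\to A$ by $f(u)=f_K(u)$ for any $K\in\mathcal{P}$ containing $u$; coherence makes this well defined and gives $f|_K=f_K$ for every $K\in\mathcal{P}$. Taking $K=\overline{\langle u,v\rangle}\in\mathcal{P}$ shows $f(uv)=f(u)f(v)$, so $f$ is a homomorphism, and since $\{\,f|_K=f_K : K\in\mathcal{P}\,\}$ is a compatible family of functions satisfying $\mathcal{C}$, indexed by the directed family $\mathcal{P}$ with union $U$, the map $f=\bigcup_{K\in\mathcal{P}}f|_K$ satisfies $\mathcal{C}$ because $\mathcal{C}$ is preserved under union. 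The step I expect to be the main obstacle is the middle one: the hypothesis only produces \emph{some} $\mathcal{C}$-homomorphism on each $K$, with no compatibility between different $K$'s, so one cannot simply take a union of them; reorganising the data as an inverse system of \emph{finite} non-empty sets — finiteness being exactly what makes the inverse limit non-empty — is what forces a coherent global choice to exist.
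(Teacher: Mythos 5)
Your proposal is correct and follows essentially the same route as the paper's own proof: index by the directed family of finitely generated closed subgroups of $U$ (using Lemma~\ref{locally_closed} to see they lie in $U$ and exhaust it), form the inverse system of the non-empty finite sets of $\mathcal{C}$-homomorphisms under restriction, and extract a coherent family from the non-empty inverse limit. Your justification that each $P_K$ is finite is slightly more explicit than the paper's, but the argument is the same.
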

\begin{proof}
	For all finitely generated closed subgroup $H\leq_cU$ denote by $\mathcal{C}_H$ the set of all continuous homomorphisms $H\to A$ satisfying $\mathcal{C}$. Since $H$ is f.g and $A$ finite, $\operatorname{Hom}(H,A)$ is finite. Hence so is $\mathcal{C}_H$. For all $H\leq K$ finitely generated closed subgroups of $U$ we can define a map $\varphi_{KH}:\mathcal{C}_K\to \mathcal{C}_H$ by $f\to f|_H$, since the property preserved under reduction of the domain. Notice that the finitely generated closed subgroups of $U$ ordered by inclusion form a directed set. Thus, $\{\mathcal{C}_H,\varphi_{KH}\}$ is an inverse system of finite sets. Thus, by $\cite[Proposition 1.1.4]{ribes2000profinite}$ its inverse limit is nonempty. Notice that an element in the inverse limit is a homomorphism $\bigcup H\to A$. By Lemma \ref{locally_closed} $\bigcup H=U$. Eventually, by assumption that $\mathcal{C}$ is closed under union, we get that there is a homomorphism $U\to A$ satisfying $\mathcal{C}$ as required. 
\end{proof}
	\subsection{Massey products}
	The first property we deal with is the vanishing of the $n$-fold Massey products. Massey products were defined in a much wider context, and were first introduced by Massey in \cite{massey1958some}, where he proved that Borromean rings are not equivalent to three unlinked circles by showing that the singular cochain complex of the complement of the Borromean rings in $\mathbb{R}^3$ admits a non-trivial triple Massey product. In this paper we will only present Massey products in the context of Galois cohomology of profinite groups.
	
	Let $G$ be a profinite group, and $a_1,...,a_n\in H^1(G,\mathbb{F}_p)$. A defining system for $a_1,...,a_n$ is a set $a_{ij}\in C^1(G,\mathbb{F}_p)$ for $1\leq i\leq j\leq n$, $(i,j)\ne (1,n)$ which satisfies:
	\begin{enumerate}
		\item For all $i$, $[a_{ii}]=a_i$
		\item $\partial a_{ij}=\sum _{r=i}^{j-1}a_{ir}a_{r+1,j}$ for all $1\leq i\leq j\leq n$, $(i,j)\ne (1,n)$. 
	\end{enumerate}
	The Massey product $\langle a_1,...,a_n\rangle$ is the subset of $H^2(G,\mathbb{F}_p)$ consist of all cohomology classes $[\sum _{r=1}^{n-1}a_{1r}a_{r+1,n}]$ obtained from defining systems for $a_1,...,a_n$. We say that the Massey product is vanishing if it contains $0$.
	
	In case $G$ acts trivially on $\mathbb{F}_p$ we have the following equivalence:
	
	Denote by $U_{n+1}(\mathbb{F}_p)$ the group of $n+1\times n+1$ unitriangular matrices over the ring $\mathbb{F}_p$, and by $\overline{U_{n+1}(\mathbb{F}_p)}$ the quotient of $U_{n+1}(\mathbb{F}_p)$ by the $(1,n+1)$ entry, which is equivalent to the group of unitriangular matrices with the $(1,n+1)$ entry omitted. Let $G$ be a profinite group acting trivially on $\mathbb{F}_p.$ Then, $G$ has the vanishing $n$- Massey product property iff for every (continuous) homomorphism $\varphi:G\to \overline{U_{n+1}(\mathbb{F})_p}$ there exists a (continuous) homomorphism $\psi:G\to U_{n+1}(\mathbb{F}_p)$ such that for every element $g\in G$ and $1\leq i\leq n$ $[\psi(g)]_{i,i+1}=[\varphi(g)]_{i,i+1}$.
	\[
	\xymatrix@R=14pt{ & & &G \ar@{->}[dd]^(0.3){\varphi}  
		\ar@{.>}[]!<0ex,0ex>;[dl]!<2ex,-3ex>& \\
		&&&&\\
		1 \ar[r] & \mathbb{F}_p \ar[r] & {\begin{bsmallmatrix}
				1&\rho_{1,2}&\rho_{1,3}&...&\rho_{1,n}\\
				&1&\rho_{2,3}&...&\rho_{2,n}\\
				&&\ddots&\ddots&\vdots \\
				&&&1&\rho_{n-1,n}\\
				&&&&1\\
		\end{bsmallmatrix}} \ar[r]^{\alpha}&{\begin{bsmallmatrix}
				1&\rho_{1,2}&\rho_{1,3}&...&\\
				&1&\rho_{2,3}&...&\rho_{2,n}\\
				&&\ddots&\ddots&\vdots \\
				&&&1&\rho_{n-1,n}\\
				&&&&1\\
		\end{bsmallmatrix}}\ar[r]&1\\
	}
	\]

	For more convenience, we call such a homomorphism $\psi:G\to U_{n+1}(\mathbb{F}_p)$ a "twisted solution" to the diagram. It has been conjectured by Min\'{a}\v{c} and T\^{a}n that for every $n$ and every field $F$ contains a primitive root of unity, $G_F$ satisfies the $n$- vanishing Massey product property ( see \cite{minavc2016tripleingalois}. In the paper \cite{minavc2016triple} was this conjecture in fact extended to all fields .). This conjecture is known to be true in the following cases:
	\begin{itemize}
		\item $n=3$, $p=2$ and $F$ is arbitrary (\cite{minavc2016triple}).
		\item $n=3$, $p$ is odd, and $F$ is arbitrary (\cite{matzri2014triple}. See also \cite{efrat2017triple}).
		\item $n=4$, $p=2$ and $F$ is arbitrary (\cite{merkurjev2023massey}).
		\item $F$ is a number field and $n$ and $p$ are arbitrary (\cite{harpaz2023massey}). 
	\end{itemize} 
	\begin{thm}\label{main theorm 1}
		Let $G$ be a profinite group acting trivially on $\mathbb{F}_p$ such that for every closed subgroup $H\leq_cG$, $H$ satisfies the $n$- vanishing Massey product property. Then so does every closed subgroup of $\hat{G}$.
	\end{thm}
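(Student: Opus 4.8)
The plan is to use the reformulation given above (valid since every closed subgroup of $\hat G$, like $\hat G$ itself, carries the trivial action on $\mathbb{F}_p$): a profinite group $K$ has the $n$-vanishing Massey product property exactly when every continuous homomorphism $\varphi\colon K\to\overline{U_{n+1}(\mathbb{F}_p)}$ admits a continuous twisted solution $\psi\colon K\to U_{n+1}(\mathbb{F}_p)$. The strategy is to reduce the problem for a closed subgroup of $\hat G$ to its \emph{abstract} analogue over a finite-index subgroup of $G$ via Lemma~\ref{from_closed_to_open}, then feed that into Proposition~\ref{key tool}, whose hypothesis will be verified using the Nikolov--Segal theorem together with the hypothesis of the theorem applied to finitely generated closed subgroups of $G$.

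Concretely, I would fix a closed subgroup $\bar H\leq_c\hat G$ and a continuous $\varphi\colon\bar H\to\overline{U_{n+1}(\mathbb{F}_p)}$; it suffices to produce a continuous twisted solution for it. By Lemma~\ref{from_closed_to_open}, $\varphi$ extends to a continuous homomorphism $\tilde\varphi\colon\bar U\to\overline{U_{n+1}(\mathbb{F}_p)}$ for some open subgroup $\bar U\leq_o\hat G$ containing $\bar H$. Put $U=\bar U\cap G$; then $U$ is a finite-index subgroup of $G$, it is dense in $\bar U$, and $\bar U\cong\hat U$. Hence $\tilde\varphi$ restricts to an abstract homomorphism $\tilde\varphi_0\colon U\to\overline{U_{n+1}(\mathbb{F}_p)}$, and conversely any abstract twisted solution $\psi_0\colon U\to U_{n+1}(\mathbb{F}_p)$ for $\tilde\varphi_0$ extends, by the universal property of the profinite completion $\hat U\cong\bar U$, to a continuous homomorphism $\psi\colon\bar U\to U_{n+1}(\mathbb{F}_p)$; since $\psi$ and $\tilde\varphi$ agree on the superdiagonal along the dense subgroup $U$ and this condition is closed, $\psi$ is a continuous twisted solution for $\tilde\varphi$, and $\psi|_{\bar H}$ is one for $\varphi$. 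So everything reduces to producing an abstract twisted solution for $\tilde\varphi_0$ over $U$.

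For that I would invoke Proposition~\ref{key tool} with the profinite group $G$, its finite-index subgroup $U$, the finite group $A=U_{n+1}(\mathbb{F}_p)$, and the property $\mathcal{C}$ asserting, of a function $f$ whose domain is contained in $U$, that $[f(x)]_{i,i+1}=[\tilde\varphi_0(x)]_{i,i+1}$ for all $x$ in the domain and all $1\leq i\leq n$; this is clearly preserved under restriction and under directed unions. Its hypothesis asks, for every finitely generated closed subgroup $H\leq_c G$ contained in $U$ (by Lemma~\ref{locally_closed} these are precisely the closures of finitely generated abstract subgroups of $U$, and they do lie in $U$), for a continuous homomorphism $H\to U_{n+1}(\mathbb{F}_p)$ satisfying $\mathcal{C}$, i.e.\ a twisted solution over $H$ for $\tilde\varphi_0|_H$. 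Such an $H$ is a finitely generated profinite group, hence strongly complete by Nikolov--Segal, so the abstract homomorphism $\tilde\varphi_0|_H\colon H\to\overline{U_{n+1}(\mathbb{F}_p)}$ is automatically continuous; and $H$ is a closed subgroup of $G$, so by the theorem's hypothesis it has the $n$-vanishing Massey product property, which supplies the required continuous twisted solution $\psi_H$. Proposition~\ref{key tool} then produces $\psi_0$, and we are done.

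The step I expect to be the real obstacle is precisely the one just highlighted: $\tilde\varphi$ is continuous only for the topology that $\hat G$ induces on $\bar U$, which is strictly finer than the one $G$ induces on a finite-index subgroup, so a priori $\tilde\varphi_0|_H$ need not be continuous for the profinite topology on $H\leq_c G$ that the theorem's hypothesis refers to; the point is that a finitely generated closed subgroup of $G$ is strongly complete, so continuity is automatic and the two topologies agree. The surrounding bookkeeping --- the extension lemma, the identifications $\bar U\cong\hat U$ and $U=\bar U\cap G$, the universal property, and the density/closedness argument promoting an abstract twisted solution to a continuous one --- is routine and should just be carried out carefully.
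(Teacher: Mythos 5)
Your proposal is correct and follows essentially the same route as the paper's own proof: reduce to open subgroups of $\hat G$ via Lemma \ref{from_closed_to_open}, translate the problem into finding an abstract twisted solution over a finite-index subgroup $U$ of $G$ (the content of Proposition \ref{abstract_massey_products}, which you carry out inline via density and the universal property of $\hat U\cong\bar U$), and verify the hypothesis of Proposition \ref{key tool} on finitely generated closed subgroups using Nikolov--Segal. The continuity point you single out as the crux is exactly the one the paper relies on.
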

	We prove the theorem in several steps.
	\begin{prop}\label{abstract_massey_products}
		Let $G$ be an abstract group and consider $\mathbb{F}_p$ as a trivial $\hat{G}$- module. Then $\hat{G}$ satisfying the $n$- vanishing Massey product property iff for every \textbf{abstract} homomorphism $\varphi:G\to \overline{U_{n+1}(\mathbb{F})_p}$ there exists an \textbf{abstract} homomorphism $\psi:G\to U_{n+1}(\mathbb{F}_p)$ such that for every element $g\in G$ and $1\leq i\leq n$ $[\psi(g)]_{i,i+1}=[\varphi(g)]_{i,i+1}$. 
	\end{prop}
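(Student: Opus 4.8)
The plan is to exploit the fact that $i(G)$ is dense in $\hat{G}$ together with the continuity of homomorphisms out of finitely generated profinite groups. The key point is that the diagram in the statement involves only finite target groups, so continuity of the maps into $\overline{U_{n+1}(\mathbb{F}_p)}$ and $U_{n+1}(\mathbb{F}_p)$ is automatic or nearly so, and the real content is translating between continuous maps on $\hat{G}$ and arbitrary (abstract) maps on $G$.

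For the forward direction, suppose $\hat{G}$ has the $n$-vanishing Massey product property, and let $\varphi\colon G\to \overline{U_{n+1}(\mathbb{F}_p)}$ be an abstract homomorphism. Since $\overline{U_{n+1}(\mathbb{F}_p)}$ is finite, $\varphi$ factors through $G/N$ for some finite-index normal subgroup $N$, hence by the universal property of profinite completion it extends uniquely to a continuous $\hat{\varphi}\colon \hat{G}\to \overline{U_{n+1}(\mathbb{F}_p)}$ with $\hat{\varphi}\circ i=\varphi$. By hypothesis there is a continuous twisted solution $\hat{\psi}\colon \hat{G}\to U_{n+1}(\mathbb{F}_p)$ agreeing with $\hat{\varphi}$ on the superdiagonal entries for every element of $\hat{G}$; then $\psi:=\hat{\psi}\circ i\colon G\to U_{n+1}(\mathbb{F}_p)$ is an abstract homomorphism, and the superdiagonal condition, being an identity that holds on all of $\hat{G}$, holds in particular on $i(G)$, i.e. $[\psi(g)]_{i,i+1}=[\varphi(g)]_{i,i+1}$ for all $g\in G$. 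So $\psi$ is the desired abstract twisted solution.

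For the converse, suppose every abstract $\varphi\colon G\to \overline{U_{n+1}(\mathbb{F}_p)}$ admits an abstract twisted solution, and let $\Phi\colon \hat{G}\to \overline{U_{n+1}(\mathbb{F}_p)}$ be continuous. Restrict along $i$ to get the abstract homomorphism $\varphi:=\Phi\circ i\colon G\to \overline{U_{n+1}(\mathbb{F}_p)}$, obtain an abstract twisted solution $\psi\colon G\to U_{n+1}(\mathbb{F}_p)$, and extend it via the universal property to a continuous $\Psi\colon \hat{G}\to U_{n+1}(\mathbb{F}_p)$ with $\Psi\circ i=\psi$. It remains to check that $\Psi$ is a twisted solution for $\Phi$, i.e. that $[\Psi(x)]_{i,i+1}=[\Phi(x)]_{i,i+1}$ for all $x\in\hat{G}$, not merely for $x\in i(G)$. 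This follows because both sides are continuous functions of $x$ (composition of continuous maps with the continuous coordinate projection $U_{n+1}(\mathbb{F}_p)\to\mathbb{F}_p$, where $\mathbb{F}_p$ carries the discrete topology), they agree on the dense subset $i(G)$, and $\mathbb{F}_p$ is Hausdorff, so they agree everywhere.

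The main obstacle — really the only subtlety — is this last density argument in the converse direction: one must be careful that "twisted solution" is a condition that can be verified pointwise and that the relevant coordinate functions are continuous into a Hausdorff (indeed discrete, finite) space, so that agreement on the dense image $i(G)$ propagates to all of $\hat{G}$. Everything else is a routine application of the universal property of $\hat{G}$ and the fact that maps to finite groups factor through finite quotients.
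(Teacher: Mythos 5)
Your proof is correct and follows essentially the same route as the paper: lift via the universal property in one direction, restrict in the other, and use density of $i(G)$ together with the Hausdorff (discrete) target to propagate the superdiagonal condition from $i(G)$ to all of $\hat{G}$. No further comments are needed.
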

	\begin{proof}
		$\Rightarrow$ Let $\varphi: G\to \overline{U_{n+1}(\mathbb{F})_p}$ be an abstract homomorphism. By definition of the profinite completion it can be lifted to a continuous homomorphism $\bar{\varphi}:\hat{G}\to \overline{U_{n+1}(\mathbb{F})_p}$. By assumption, there exists a twisted (continuous) solution $\psi:\hat{G}\to U_{n+1}(\mathbb{F}_p)$. Then $\psi|_G:G\to U_{n+1}(\mathbb{F}_p)$ is an abstract twisted solution for $G$.
		
		$\Leftarrow$ 
		Let $\varphi:\hat{G}\to \overline{U_{n+1}(\mathbb{F})_p} $ be a continuous homomorphism. Its reduction to $G$ is an abstract homomorphism $\varphi|_G:G\to \overline{U_{n+1}(\mathbb{F})_p}$. By assumption $G$ admits an abstract twisted solution $\psi:G\to U_{n+1}(\mathbb{F}_p)$. We claim that its continuous lifting $\bar{\psi}:\hat{G}\to U_{n+1}(\mathbb{F}_p)$ is a twisted solution. Indeed, for every $1\leq i<n$ the maps $[\bar{\psi}]_{i,i+1}$ and $[\varphi]_{i,i+1}$ which goes to an Hausdorff space, identify on the dense subset $G$, thus they are equal.
	\end{proof}
	\begin{lem}
		In order to prove the property holds for every closed subgroup of $\hat{G}$, it is enough to prove so for every open subgroup.
	\end{lem}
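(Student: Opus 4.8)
The plan is to deduce the statement for an arbitrary closed subgroup $H\leq_c\hat G$ from the open case by enlarging the domain of a given test homomorphism. Concretely, assume every open subgroup of $\hat G$ has the $n$-vanishing Massey product property, fix a closed subgroup $H\leq_c\hat G$, and let $\varphi\colon H\to \overline{U_{n+1}(\mathbb{F}_p)}$ be an arbitrary continuous homomorphism; we must exhibit a twisted solution $\psi\colon H\to U_{n+1}(\mathbb{F}_p)$, i.e.\ a continuous homomorphism with $[\psi(g)]_{i,i+1}=[\varphi(g)]_{i,i+1}$ for all $g\in H$ and $1\leq i\leq n$.

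The first step is to apply Lemma \ref{from_closed_to_open}. The point is that $\overline{U_{n+1}(\mathbb{F}_p)}$ is a \emph{finite} group, so the lemma is applicable (with $\hat G$ as the ambient profinite group) and produces an open subgroup $U$ of $\hat G$ containing $H$ together with a continuous homomorphism $\tilde\varphi\colon U\to \overline{U_{n+1}(\mathbb{F}_p)}$ extending $\varphi$.

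The second step is to invoke the standing hypothesis for the open subgroup $U$: applied to $\tilde\varphi$ it yields a twisted solution $\tilde\psi\colon U\to U_{n+1}(\mathbb{F}_p)$, so that $[\tilde\psi(g)]_{i,i+1}=[\tilde\varphi(g)]_{i,i+1}$ for all $g\in U$ and $1\leq i\leq n$. Setting $\psi\coloneqq\tilde\psi|_H$ then finishes the argument: $\psi$ is continuous, and for $g\in H$ we get $[\psi(g)]_{i,i+1}=[\tilde\psi(g)]_{i,i+1}=[\tilde\varphi(g)]_{i,i+1}=[\varphi(g)]_{i,i+1}$, since $H\subseteq U$ and $\tilde\varphi|_H=\varphi$.

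I do not expect any serious obstacle here: the whole argument hinges on the elementary observation that the target $\overline{U_{n+1}(\mathbb{F}_p)}$ is finite, which is exactly what makes Lemma \ref{from_closed_to_open} available, together with the trivial fact that restricting a homomorphism leaves all of its matrix entries unchanged. The only things to be careful about are bookkeeping: that the open subgroup furnished by Lemma \ref{from_closed_to_open} genuinely contains $H$ (so that the restriction makes sense and the entry conditions transfer), and that continuity of $\psi$ is inherited from that of $\tilde\psi$.
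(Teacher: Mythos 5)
Your proof is correct and follows essentially the same route as the paper: apply Lemma \ref{from_closed_to_open} to extend the given continuous homomorphism $\varphi$ from the closed subgroup $H$ to an open subgroup $U\supseteq H$ of $\hat G$ (using finiteness of $\overline{U_{n+1}(\mathbb{F}_p)}$), invoke the hypothesis on $U$ to get a twisted solution there, and restrict back to $H$. The only difference is that you spell out the bookkeeping (that $U$ contains $H$ and that the diagonal-entry condition survives restriction) which the paper leaves implicit.
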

	\begin{proof}
		Let $H\leq_cG$ and $\varphi:H\to  \overline{U_{n+1}(\mathbb{F})_p}$ a continuous homomorphism. By Lemma \ref{from_closed_to_open} there exists an open normal subgroup $O\unlhd_cG$ such that $\varphi$ can be lifted to a continuous homomorphis $\bar{\varphi}:O\to \bar{U_{n+1}(\mathbb{F})_p}$. Let $\psi:O\to U_{n+1}(\mathbb{F})_p $ be a "twisted" solution to the corresponding embedding problem, then $\psi|_H$ is a twisted solution for $H$.
	\end{proof}
	
	\begin{proof}[Proof of Theorem \ref{main theorm 1}]
	Let $U$ be a finite index subgroup of $G$, and $\varphi:U\to \overline{U_{n+1}(\mathbb{F})_p}$ an abstract homomorphism. By Proposition \ref{abstract_massey_products} it is enough to prove that $\varphi$ admits an abstract twisted solution. For every finite subset $D=\{x_1,...,x_n\}$ of $U$, look at the reduction of $\varphi$ to $H_D=\overline{\langle D\rangle }$. Since $H_D$ is finitely generated, by Nikolov\&Segal Theorem $\varphi|_{H_D}$ is continuous. So, since $H_D$ is a closed subgroup of $G$, by assumption, it admits a twisted solution $\psi_D$. It is easy to see that if $D\subseteq E$ then any twisted solution from $H_E$ induces a twisted solution from $H_D$ by reduction of the domain, and that the union of twisted solutions is a twisted solution. Thus, by Proposition \ref{key tool} there exists an abstract twisted solution from $U$. In conclusion, $\hat{U}\cong \bar{U}$ satisfies the $n$- fold vanishing Massey product property. Since these are all the open subgroups of $\hat{G}$, by Lemma \ref{from_closed_to_open} we are done. 
	\end{proof}
	\subsection{Cyclotomic orientation}
	Let $p$ be a prime and $G$ be a profinite group acting trivially on $\mathbb{F}_p$. A $p$- orientation of $G$ is a homomorphism $\theta:G\to \mathbb{Z}_p^{\times}$. A profinite group together with a $p$- orientation is called a $p$- oriented profinite group. Later on we will omit the $p$. We will always assume that this orientation induces the trivial action on $\mathbb{F}_p$. Notice that the orientation makes $\mathbb{Z}_p$ a $\mathbb{Z}_p[G]-$ module, by defining $g.x=\theta(g)x$. We denote this module by $\mathbb{Z}_p(1)$.
	
	Let $F$ be a field containing a primitive root of unity. Denote by $\mu_{p^\infty}(F)$ the subgroup of $\bar{F}$ contains of all power $p$ roots of unity. Then, $\operatorname{Aut}(\mu_{p^\infty}(F))\cong \mathbb{Z}_p^{\times}$. Thus, the natural action of $G_F$ on $\bar{F}$ makes $G_F$ as an oriented profinite group. This induced orientation is called the "arithmetical orientation". The arithmetical orientation of $G_F$ admits a special property: 
	
	By the exact sequence 
		\[
	\xymatrix@R=14pt{ 1\ \ar[r]& \mu_{p^n}(F)\ \ar[r]& {\bar{F}}^{\times}\ \ar[r]^{p^n}& {\bar{F}}^{\times}\ \ar[r]& 1 \\
		}
	\]
	we get that ${\bar{F}}^{\times}/(\bar{F})^{\times {p^n}}\cong H^1(G_F,\mu_{p^n}(F))\cong H^1(G_F,\mathbb{Z}_p(1)/p^n)$. Thus the natural epimorphism ${\bar{F}}^{\times}/(\bar{F})^{\times {p^n}}\to {\bar{F}}^{\times}/(\bar{F})^{\times {p}}$ implies that the natural map $H^1(G_F,\mathbb{Z}_p(1)/p^n)\to H^1(G_F,\mathbb{Z}_p(1)/p)$ is in fact an epimorphism. That was the motivation to the following definition:
	\begin{defn}[\cite{quadrelli2021chasing}]
		Let $G$ be an oriented profinite group such that the induced action on $\mathbb{F}_p$ is trivial. Then, $G$ is called a \textbf{cyclotomic} oriented profinite group if the induced module structure on $\mathbb{Z}_p$ satisfies that the natural map $$H^1(H,\mathbb{Z}_p(1)/p^n\mathbb{Z}_p(1))\to H^1(H,\mathbb{Z}_p(1)/p\mathbb{Z}_p(1))$$ is surjective, for all closed subgroup $H\leq_cG$ and for every natural number $n$.
	\end{defn}
\begin{cor}
	Let $F$ be a field containing a primitive root of unity and $G_F$ be its absolute Galois group, then equipped with the arithmetical orientation, $G_F$ is a cyclotomic oriented profinite group.  
\end{cor}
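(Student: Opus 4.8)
The goal is to show that if $F$ contains a primitive $p$-th root of unity, then $G_F$ equipped with the arithmetical orientation is cyclotomic, i.e. for every closed subgroup $H \leq_c G_F$ and every $n$, the natural map $H^1(H, \mathbb{Z}_p(1)/p^n) \to H^1(H, \mathbb{Z}_p(1)/p)$ is surjective. The plan is to reduce the statement about arbitrary closed subgroups $H$ to the computation already carried out in the excerpt for $G_F$ itself, by using the fact, noted in the introduction, that every closed subgroup of an absolute Galois group is again an absolute Galois group of a field containing $\mu_p$.

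Concretely, first I would let $H \leq_c G_F$ be a closed subgroup and set $K = \bar{F}^H$, the fixed field of $H$, so that $H = \operatorname{Gal}(\bar{F}/K) = G_K$ and $\bar{K} = \bar{F}$. Since $F \subseteq K$ and $F$ contains a primitive $p$-th root of unity, so does $K$. Next I would check that the module structure is compatible: the arithmetical orientation $\theta_H : H \to \operatorname{Aut}(\mu_{p^\infty}(\bar{F})) \cong \mathbb{Z}_p^\times$ is simply the restriction of $\theta_{G_F}$ to $H$, because both are induced by the tautological action on $\mu_{p^\infty}$ inside $\bar{F} = \bar{K}$; hence $\mathbb{Z}_p(1)$ as an $H$-module is $\mathbb{Z}_p(1)$ as a $G_F$-module restricted to $H$, and this is exactly the arithmetical orientation of $G_K$. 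Then the surjectivity of $H^1(H, \mathbb{Z}_p(1)/p^n) \to H^1(H, \mathbb{Z}_p(1)/p)$ follows by repeating verbatim the argument given just before the definition, now applied to the field $K$ in place of $F$: the Kummer sequence $1 \to \mu_{p^n}(\bar{K}) \to \bar{K}^\times \xrightarrow{p^n} \bar{K}^\times \to 1$ gives $\bar{K}^\times/(\bar{K}^\times)^{p^n} \cong H^1(G_K, \mathbb{Z}_p(1)/p^n)$, the analogous identification for $p$, and the tautological surjection $\bar{K}^\times/(\bar{K}^\times)^{p^n} \twoheadrightarrow \bar{K}^\times/(\bar{K}^\times)^p$ furnishes the desired surjectivity.

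Since this chain of reasoning works for every closed $H \leq_c G_F$, the group $G_F$ with the arithmetical orientation satisfies the defining condition of a cyclotomic oriented profinite group, which is what we wanted. I do not expect any real obstacle here: the only mildly delicate point is verifying that "arithmetical orientation of $G_F$, restricted to $H$" genuinely coincides with "arithmetical orientation of $G_K$", and this is immediate once one observes $\bar{K} = \bar{F}$ and $\mu_{p^\infty}(\bar{K}) = \mu_{p^\infty}(\bar{F})$, so that the two cyclotomic characters are literally the same map restricted to different domains. Everything else is a direct appeal to the Kummer-sequence computation already displayed in the excerpt.
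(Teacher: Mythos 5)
Your proposal is correct and follows exactly the route the paper intends: the corollary is left as an immediate consequence of the Kummer-sequence computation displayed before the definition, combined with the observation from the introduction that every closed subgroup of $G_F$ is itself the absolute Galois group of an intermediate field containing the relevant roots of unity, with the arithmetical orientation restricting compatibly. Your explicit verification that the restricted orientation of $G_F$ agrees with the arithmetical orientation of $G_K$ is the only point the paper glosses over, and you handle it correctly.
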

\begin{lem}\label{H2_trivial}
	An oriented profinite group is cyclotomic iff for every $n$ and every closed subgroup $H\leq_cG$, the kernel of the natural map $$H^2(H,\mathbb{Z}_p(1)/p^{n-1}\mathbb{Z}_p(1))\to H^2(H,\mathbb{Z}_p(1)/p^n\mathbb{Z}_p(1))$$ which induced by the short exact sequence 
	\[
	\xymatrix@R=14pt{ 1\ \ar[r]& \mathbb{Z}_p/p^{n-1}\ \ar[r]&\mathbb{Z}_p/p^n \ \ar[r]& \mathbb{Z}_p/p\ \ar[r]& 1 \\
	}
	\]
	is trivial.
\end{lem}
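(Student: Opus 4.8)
The plan is to unwind the definition of cyclotomic in terms of the long exact cohomology sequences attached to the two short exact sequences of coefficient modules
\[
1\to \mathbb{Z}_p(1)/p^{n-1}\to \mathbb{Z}_p(1)/p^n\to \mathbb{Z}_p(1)/p\to 1,
\qquad
1\to \mathbb{Z}_p(1)/p\to \mathbb{Z}_p(1)/p^n\to \mathbb{Z}_p(1)/p^{n-1}\to 1,
\]
and then to play the two resulting six-term sequences against one another for a fixed closed subgroup $H\le_c G$. Throughout I will write $M_k=\mathbb{Z}_p(1)/p^k$ for brevity; the key point is that $M_1=\mathbb{Z}_p(1)/p$ appears as both the sub and the quotient above (since $p^{n-1}M_n\cong M_1$ and $M_n/p^{n-1}M_n\cong M_{n-1}$ in the first sequence, and dually in the second), and that the map $H^1(H,M_n)\to H^1(H,M_1)$ in the definition of cyclotomic is exactly the connecting-to-next-term map in the first long exact sequence.

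First I would show the forward implication. Assume $G$ is cyclotomic and fix $n$ and $H\le_c G$. From the first short exact sequence one gets the exact piece
\[
H^1(H,M_n)\xrightarrow{\ \pi_*\ } H^1(H,M_1)\xrightarrow{\ \delta\ } H^2(H,M_{n-1})\xrightarrow{\ \iota_*\ } H^2(H,M_n).
\]
By hypothesis $\pi_*$ is surjective, hence $\delta=0$, hence $\iota_*$ is injective, i.e. the kernel of $H^2(H,M_{n-1})\to H^2(H,M_n)$ is trivial — which is exactly what is claimed. (I should double-check that the map $M_{n-1}\hookrightarrow M_n$ figuring in the statement of the lemma is the "multiplication by $p$" map that arises in this long exact sequence, not something else; this is a compatibility of the two presentations of $M_n$ and should be immediate, but it is the one bookkeeping point worth stating carefully.) Since $n$ and $H$ were arbitrary, the "only if" direction is done.

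For the converse, assume that for every $n$ and every $H\le_c G$ the map $H^2(H,M_{n-1})\to H^2(H,M_n)$ is injective; I want surjectivity of $H^1(H,M_n)\to H^1(H,M_1)$ for every $n$ and $H$. I would argue by induction on $n$, the case $n=1$ being trivial. For the inductive step, use the \emph{same} exact piece as above: injectivity of $\iota_*\colon H^2(H,M_{n-1})\to H^2(H,M_n)$ forces $\delta\colon H^1(H,M_1)\to H^2(H,M_{n-1})$ to be zero, hence $\pi_*\colon H^1(H,M_n)\to H^1(H,M_1)$ is surjective. Actually this already gives the result for each $n$ directly, with no induction needed — the two conditions are term-by-term equivalent through the single six-term sequence — so the proof is essentially symmetric.

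The only genuine subtlety, and the step I would be most careful about, is matching the maps: verifying that the connecting homomorphism $\delta$ in the long exact sequence of $1\to M_{n-1}\to M_n\to M_1\to 1$ really is the obstruction to lifting a class in $H^1(H,M_1)$ to $H^1(H,M_n)$ along the reduction map (so that "$\pi_*$ surjective" $\iff$ "$\delta=0$"), and that the inclusion $M_{n-1}\hookrightarrow M_n$ in this sequence coincides with the one named in the lemma. Both are standard facts about long exact sequences in group cohomology together with the identification $p^{n-1}\cdot\colon \mathbb{Z}_p(1)/p\xrightarrow{\sim} p^{n-1}\mathbb{Z}_p(1)/p^n\mathbb{Z}_p(1)$, and once they are pinned down the lemma is a one-line diagram chase in each direction. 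No finiteness or completion input is needed here — this is purely a statement about the fixed oriented profinite group $G$ — so none of Nikolov--Segal, Lemma \ref{locally_closed}, or Proposition \ref{key tool} enters; those tools are for the later theorems, and the present lemma is the cohomological reformulation that will feed into them.
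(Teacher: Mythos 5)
Your argument is correct and is essentially the paper's own proof: both use the six-term piece $H^1(H,M_n)\to H^1(H,M_1)\xrightarrow{\delta} H^2(H,M_{n-1})\to H^2(H,M_n)$ of the long exact sequence attached to $1\to M_{n-1}\to M_n\to M_1\to 1$, and the chain of equivalences ``$\pi_*$ surjective $\iff$ $\delta=0$ $\iff$ $\iota_*$ injective.'' Your closing observation that no induction and no second coefficient sequence are needed matches the paper exactly (your one parenthetical mislabels which sequence has sub $p^{n-1}M_n\cong M_1$, but the sequence you actually use is the right one).
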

\begin{proof}
	Look at the exact sequence 
		\[
	\xymatrix@R=14pt{ 1\ \ar[r]& \mathbb{Z}_p/p^{n-1}\ \ar[r]&\mathbb{Z}_p/p^n \ \ar[r]& \mathbb{Z}_p/p\ \ar[r]& 1 \\
	}
	\]
	By the corresponding long exact sequence, the map $H^1(H,\mathbb{Z}_p(1)/p^n\mathbb{Z}_p(1))\to H^1(H,\mathbb{Z}_p(1)/p\mathbb{Z}_p(1))$ is surjective if and only if the connecting homomorphism $\delta:H^1(H,\mathbb{Z}_p(1)/p\mathbb{Z}_p(1))\to H^2(H,\mathbb{Z}_p(1)/p^n\mathbb{Z}_p(1))$ is trivial, if and only if the induced map $H^2(H,\mathbb{Z}_p(1)/p^n\mathbb{Z}_p(1))\rightarrow H^2(H,\mathbb{Z}_p(1)/p^n\mathbb{Z}_p(1))$ is injective. 
\end{proof}
\begin{rem}
	Since $G$ acts trivially on $\mathbb{F}_p$, $H^1(H,\mathbb{Z}_p/p\mathbb{Z}_p)=\operatorname{Hom}(H,\mathbb{Z}_p/p\mathbb{Z}_p)$.
\end{rem}
	\begin{thm}\label{Main theorem 2}
		Let $G$ be a cyclotomic oriented profinite group. Then so is $\hat{G}$.
	\end{thm}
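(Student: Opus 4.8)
The plan is to run the same machine as in the proof of Theorem~\ref{main theorm 1}. First, $\hat G$ carries a canonical orientation: since $\mathbb{Z}_p^{\times}$ is profinite, $\theta\colon G\to\mathbb{Z}_p^{\times}$ extends uniquely to a continuous homomorphism $\hat\theta\colon\hat G\to\mathbb{Z}_p^{\times}$, and because $G$ is dense in $\hat G$ and $\theta$ induces the trivial action on $\mathbb{F}_p$, so does $\hat\theta$; thus $(\hat G,\hat\theta)$ is an oriented profinite group of the required type, and the assertion to prove is that it is cyclotomic. Fix a modulus $n$ and introduce the finite groups $A_n=\mathbb{Z}/p^n\mathbb{Z}\rtimes(1+p\mathbb{Z}/p^n\mathbb{Z})$ (the affine group, with the action by multiplication), $\bar A_n=\mathbb{Z}/p\mathbb{Z}\times(1+p\mathbb{Z}/p^n\mathbb{Z})$, and the map $q_n\colon A_n\to\bar A_n$, $(a,u)\mapsto(a\bmod p,\,u)$. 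One checks that $q_n$ is a surjective homomorphism — the key point being that every $u$ in the second factor is $\equiv 1\pmod p$ — with kernel $p\mathbb{Z}/p^n\mathbb{Z}\cong\mathbb{Z}/p^{n-1}\mathbb{Z}$, so that $1\to\mathbb{Z}/p^{n-1}\mathbb{Z}\to A_n\xrightarrow{q_n}\bar A_n\to 1$ is the group-theoretic shadow of the coefficient sequence of Lemma~\ref{H2_trivial}. For any profinite group $K$ with an orientation $\vartheta$ trivial on $\mathbb{F}_p$, sending a continuous $1$-cocycle $f$ to $g\mapsto(f(g),\vartheta(g)\bmod p^n)$ identifies $H^1(K,\mathbb{Z}_p(1)/p^n)$ with the set of continuous homomorphisms $\rho\colon K\to A_n$ whose second coordinate is $\vartheta\bmod p^n$, taken modulo conjugation by $\mathbb{Z}/p^n\mathbb{Z}$, and under this identification the coefficient-reduction map to $H^1(K,\mathbb{Z}_p(1)/p)=\operatorname{Hom}(K,\mathbb{F}_p)$ becomes $\rho\mapsto\operatorname{pr}_1\circ q_n\circ\rho$. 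Consequently $H^1(K,\mathbb{Z}_p(1)/p^n)\to H^1(K,\mathbb{Z}_p(1)/p)$ is surjective if and only if every continuous homomorphism $\eta\colon K\to\bar A_n$ with second coordinate $\vartheta\bmod p^n$ lifts along $q_n$ to a continuous homomorphism $K\to A_n$; call $K$ \emph{$n$-cyclotomic} in that situation, so that $K$ is cyclotomic exactly when every closed subgroup of $K$ is $n$-cyclotomic for every $n$.

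Next I would make the same two reductions as in Section~1. It suffices to prove that every open subgroup of $\hat G$ is $n$-cyclotomic for every $n$: given a closed subgroup $H\leq_c\hat G$ and a class $c\in H^1(H,\mathbb{F}_p)=\operatorname{Hom}(H,\mathbb{F}_p)$, Lemma~\ref{from_closed_to_open} extends $c$ to a continuous homomorphism $\tilde c\colon U\to\mathbb{F}_p$ on some open $U\leq_o\hat G$, that is, to a class $\tilde c\in H^1(U,\mathbb{Z}_p(1)/p)$ with $\operatorname{res}^U_H\tilde c=c$; if $U$ is $n$-cyclotomic then, by Lemma~\ref{H2_trivial}, the connecting map $\delta_U\colon H^1(U,\mathbb{Z}_p(1)/p)\to H^2(U,\mathbb{Z}_p(1)/p^{n-1})$ vanishes, so naturality of $\delta$ under restriction gives $\delta_H(c)=\operatorname{res}^U_H\delta_U(\tilde c)=0$, and since $c$ was arbitrary the long exact sequence shows $H$ is $n$-cyclotomic. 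Finally, every open subgroup of $\hat G$ has the form $\bar V\cong\hat V$ for a finite index subgroup $V\leq G$, and the orientation it inherits from $\hat\theta$ corresponds under this isomorphism to $\widehat{\theta|_V}$ (both are continuous and restrict to $\theta|_V$ on the dense subgroup $V$), while being cyclotomic is an invariant of the isomorphism class of an oriented profinite group; so it remains to show each such $\hat V$ is $n$-cyclotomic for every $n$.

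For the core of the proof, fix a finite index $V\leq G$, a modulus $n$, and a continuous homomorphism $\eta\colon\hat V\to\bar A_n$ with second coordinate $\widehat{\theta|_V}\bmod p^n$; I have to lift $\eta$ continuously along $q_n$. Let $\eta_0=\eta|_V\colon V\to\bar A_n$ be its restriction to an abstract homomorphism; it is enough to find an abstract homomorphism $f\colon V\to A_n$ with $q_n\circ f=\eta_0$, for then its unique continuous extension $\hat V\to A_n$ has composite with $q_n$ agreeing with $\eta$ on the dense subgroup $V$, hence equal to $\eta$. To produce $f$ I would invoke Proposition~\ref{key tool} with $A=A_n$ and with $\mathcal C$ the property ``$f$ is a homomorphism and $q_n\circ f=\eta_0|_{\operatorname{dom}f}$'', which is visibly preserved under restriction of the domain and under directed unions of compatible maps. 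For a finitely generated closed subgroup $H$ of $V$ — which, by Lemma~\ref{locally_closed}, is a finitely generated closed subgroup of $G$ contained in $V$ — the restriction $\eta_0|_H$ is continuous by the Nikolov--Segal theorem, its second coordinate is $\theta|_H\bmod p^n$, and $H$ is $n$-cyclotomic because it is a closed subgroup of the cyclotomic group $G$; hence $\eta_0|_H$ lifts along $q_n$ to a continuous homomorphism $H\to A_n$ satisfying $\mathcal C$. Proposition~\ref{key tool} then yields the desired abstract $f\colon V\to A_n$. As $\eta$, $n$, and $V$ were arbitrary, every open subgroup of $\hat G$ is $n$-cyclotomic for all $n$, so by the first reduction so is every closed subgroup, and $(\hat G,\hat\theta)$ is cyclotomic.

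The step I expect to be the real obstacle is the cohomological dictionary in the first paragraph: setting up $A_n$ and $q_n$ so that $q_n$ is honestly a group homomorphism (which is precisely where triviality of the action modulo $p$ is used), so that its kernel matches the $\mathbb{Z}_p/p^{n-1}$ of Lemma~\ref{H2_trivial}, and so that coefficient reduction $H^1(\cdot,\mathbb{Z}_p(1)/p^n)\to H^1(\cdot,\mathbb{Z}_p(1)/p)$ corresponds, at the level of cocycles and of conjugacy classes of homomorphisms, to post-composition with $q_n$. Once that translation is in place, the remainder is the same bookkeeping — the closed-to-open reduction, the abstract-versus-continuous comparison via the universal property of $\hat G$, and the inverse-limit compactness packaged in Proposition~\ref{key tool} — that drives the proof of Theorem~\ref{main theorm 1}.
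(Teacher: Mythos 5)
Your proof is correct, but it takes a genuinely different route from the paper's. The paper works at the level of $H^2$: it uses Lemma~\ref{H2_trivial} to recast cyclotomicity as injectivity of $H^2(\cdot,\mathbb{Z}_p(1)/p^{n-1})\to H^2(\cdot,\mathbb{Z}_p(1)/p^n)$, kills the restriction $\alpha|_U$ of a kernel class by an argument parallel to Proposition~\ref{key tool} for crossed homomorphisms, and then faces the real difficulty that a $2$-cocycle trivialized on the dense subgroup $U$ need not obviously be trivial on $\hat U$ (its trivializing cochain is not a homomorphism, so it does not extend by the universal property); this is resolved by passing to the corresponding group extension, completing it, and hand-building a continuous section of $\hat C\to\hat U$ that restricts to a section of $C\to U$. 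You avoid that entire second half by staying in degree one: your dictionary identifying $1$-cocycles valued in $\mathbb{Z}_p(1)/p^n$ with homomorphisms into the affine group $A_n=\mathbb{Z}/p^n\rtimes(1+p\mathbb{Z}/p^n)$ with prescribed second coordinate (using that coboundaries vanish mod $p$ because the action on $\mathbb{F}_p$ is trivial) turns the surjectivity statement into a homomorphism-lifting problem along $q_n:A_n\to\bar A_n$, which is then formally the same machine as Theorem~\ref{main theorm 1}: Nikolov--Segal on finitely generated closed subgroups, Proposition~\ref{key tool} applied literally (with $A=A_n$) rather than by analogy, and the universal property of $\hat V$ plus density to pass from the abstract lift on $V$ to a continuous one on $\hat V$. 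The paper's approach buys a technique (comparing a continuous $2$-class on $\hat U$ with the completion of the corresponding abstract extension of $U$) that is of independent interest for higher-degree statements; your approach buys a shorter and less delicate proof, precisely because everything in sight remains a homomorphism and therefore completes canonically. All the supporting steps you use --- the lift of the orientation, the closed-to-open reduction via Lemma~\ref{from_closed_to_open} and naturality of the connecting map, and the identification of the inherited orientation on $\bar V\cong\hat V$ with $\widehat{\theta|_V}$ --- check out.
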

	We prove it by several steps.
	
	\begin{observation}
		The orientation of $G$ induces an orientation of $\hat{G}$, such that the induced module $\mathbb{Z}_p(1)/p\mathbb{Z}_p(1)$ is trivial.
	\end{observation}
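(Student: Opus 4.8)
The plan is to obtain the orientation of $\hat{G}$ from the universal property of the profinite completion, and then to deduce the triviality of $\mathbb{Z}_p(1)/p\mathbb{Z}_p(1)$ by a density argument. First, the orientation $\theta\colon G\to\mathbb{Z}_p^{\times}$ is a homomorphism from $G$ into the \emph{profinite} group $\mathbb{Z}_p^{\times}$; since $G$ is in particular residually finite, $i\colon G\to\hat{G}$ is injective and $i(G)$ is dense in $\hat{G}$. Applying the universal property of $\hat{G}$ to $\theta$ produces a unique continuous homomorphism $\hat{\theta}\colon\hat{G}\to\mathbb{Z}_p^{\times}$ with $\hat{\theta}\circ i=\theta$; this $\hat{\theta}$ is by definition an orientation of $\hat{G}$, and it endows $\mathbb{Z}_p$ with the $\mathbb{Z}_p[\hat{G}]$-module structure $g\cdot x=\hat{\theta}(g)x$, which we denote $\mathbb{Z}_p(1)$. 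Since $\hat{\theta}$ restricts to $\theta$ along $i$, this module structure is compatible with the original $\mathbb{Z}_p[G]$-module $\mathbb{Z}_p(1)$.

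Next I would check that the induced $\hat{G}$-action on $\mathbb{Z}_p(1)/p\mathbb{Z}_p(1)\cong\mathbb{F}_p$ is trivial, i.e.\ that the composite $\bar{\theta}\colon\hat{G}\xrightarrow{\hat{\theta}}\mathbb{Z}_p^{\times}\twoheadrightarrow(\mathbb{Z}/p\mathbb{Z})^{\times}$ is the constant map $1$. By the standing hypothesis the orientation $\theta$ of $G$ induces the trivial action on $\mathbb{F}_p$, which says precisely that $\bar{\theta}\circ i\equiv 1$. As $\bar{\theta}$ is continuous, $(\mathbb{Z}/p\mathbb{Z})^{\times}$ is finite and hence Hausdorff, and $i(G)$ is dense in $\hat{G}$, the map $\bar{\theta}$ agrees with the constant map $1$ on a dense subset and is therefore identically $1$. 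In particular $\hat{G}$ itself acts trivially on $\mathbb{F}_p$, so $\hat{G}$ is indeed a legitimate candidate for being a cyclotomic oriented profinite group in the sense of the definition, and the later steps can meaningfully ask whether the relevant cohomological surjectivity is inherited by $\hat{G}$.

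I do not expect any real obstacle here: the entire content is the universal property of the profinite completion together with the elementary fact that a continuous map into a Hausdorff space is determined by its restriction to a dense subgroup, and the compatibility of the two module structures is automatic since $\hat{\theta}$ extends $\theta$. The only point worth stating carefully is that $\hat{\theta}$ is the \emph{unique} continuous extension, so that the orientation of $\hat{G}$ is canonically determined by that of $G$; this follows at once from the uniqueness clause in the universal property. This observation is the bookkeeping that underlies the rest of the proof of Theorem \ref{Main theorem 2}.
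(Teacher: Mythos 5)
Your proposal is correct and follows essentially the same route as the paper: the unique continuous extension $\hat{\theta}$ comes from the universal property, and the triviality of the induced action on $\mathbb{F}_p$ is a density argument (the paper phrases it as $\operatorname{Im}(\theta)$ lying in the closed subgroup $1+p\mathbb{Z}_p$, which forces $\operatorname{Im}(\hat{\theta})$ to lie there too; your version composes with the projection to $(\mathbb{Z}/p\mathbb{Z})^{\times}$ and uses that a continuous map to a Hausdorff space vanishing on a dense subset is trivial — the two formulations are equivalent).
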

	\begin{proof}
		By the universal property of $\hat{G}$, the homomorphism $\theta:G\to \mathbb{Z}_p^{\times}$ can be lifted uniquely to a continuous homomorphism $\bar{theta}:\hat{G}\to \mathbb{Z}_p^{\times}$. By assumption, $\operatorname{Im}(G)\leq 1+p\mathbb{Z}_p^{\times}$ which is closed and thus contains $Im(\hat{G})$ too.
	\end{proof}
	\begin{lem}\label{from_closed_to_open_2}
		It is enough to prove the surjectivity for all open subgroups of $\hat{G}$.
	\end{lem}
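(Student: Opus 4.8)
The plan is to repeat, almost verbatim, the reduction used for Massey products: take an arbitrary closed subgroup and an arbitrary cohomology class downstairs, extend it to an open subgroup, apply the hypothesis there, and restrict back. Recall that the property at stake is the surjectivity of the natural map $H^1\big(H,\mathbb{Z}_p(1)/p^n\mathbb{Z}_p(1)\big)\to H^1\big(H,\mathbb{Z}_p(1)/p\mathbb{Z}_p(1)\big)$ induced by the coefficient projection $\mathbb{Z}_p(1)/p^n\mathbb{Z}_p(1)\twoheadrightarrow\mathbb{Z}_p(1)/p\mathbb{Z}_p(1)$, required for every $n$ and every closed subgroup $H\leq_c\hat G$; by the Observation above, $\hat G$ (hence every subgroup) acts trivially on $\mathbb{Z}_p(1)/p\mathbb{Z}_p(1)=\mathbb{F}_p$.

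So fix $n$ and a closed subgroup $H\leq_c\hat G$, and let $\bar a\in H^1(H,\mathbb{Z}_p(1)/p\mathbb{Z}_p(1))$. By the trivial-action remark preceding Theorem \ref{Main theorem 2}, $\bar a$ is nothing but a continuous homomorphism $a\colon H\to\mathbb{F}_p$. By Lemma \ref{from_closed_to_open} there is an open subgroup $H\leq U\leq_o\hat G$ together with a continuous homomorphism $\tilde a\colon U\to\mathbb{F}_p$ extending $a$; we regard $\tilde a$ as a class in $H^1(U,\mathbb{Z}_p(1)/p\mathbb{Z}_p(1))$ (again the action is trivial).

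Now apply the hypothesis at the open subgroup $U$: there is a class $b\in H^1(U,\mathbb{Z}_p(1)/p^n\mathbb{Z}_p(1))$ whose image under the natural map is $\tilde a$. Restriction along $H\leq U$ commutes with the maps induced by the $\hat G$-module homomorphism $\mathbb{Z}_p(1)/p^n\mathbb{Z}_p(1)\to\mathbb{Z}_p(1)/p\mathbb{Z}_p(1)$, so in the resulting commutative square the element $\operatorname{res}^U_H(b)\in H^1(H,\mathbb{Z}_p(1)/p^n\mathbb{Z}_p(1))$ maps to $\operatorname{res}^U_H(\tilde a)=[\tilde a|_H]=[a]=\bar a$. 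Thus $\bar a$ lies in the image of the natural map for $H$; as $\bar a$ and $H$ were arbitrary, $\hat G$ is cyclotomic once the surjectivity is known for all open subgroups.

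The argument is essentially formal, and I do not expect a genuine obstacle. The only points needing a word of care are the identification $H^1(H,\mathbb{F}_p)=\operatorname{Hom}_{\mathrm{cont}}(H,\mathbb{F}_p)$, which is exactly what makes Lemma \ref{from_closed_to_open} applicable (note that for $n\geq2$ the module $\mathbb{Z}_p(1)/p^n\mathbb{Z}_p(1)$ is not a trivial module, but it is only the trivial-action module $\mathbb{F}_p$ downstairs that we ever need to extend), and the compatibility of the restriction maps with the coefficient-induced maps in cohomology. One could also bypass Lemma \ref{from_closed_to_open} altogether and argue via $H^i(H,M)\cong\varinjlim_{U\supseteq H}H^i(U,M)$ for finite $M$, using that filtered colimits are exact and hence carry levelwise surjections to surjections; this alternative route has the same content.
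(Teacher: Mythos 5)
Your argument is exactly the paper's: identify the degree-one class over the closed subgroup $H$ with a continuous homomorphism to $\mathbb{F}_p$ (using the trivial action), extend it to an open subgroup via Lemma \ref{from_closed_to_open}, apply the hypothesis there to find a preimage in $H^1(U,\mathbb{Z}_p(1)/p^n)$, and restrict back to $H$, using that restriction commutes with the coefficient-induced map. The proposal is correct and matches the paper's proof; your added remarks (that only the trivial module $\mathbb{F}_p$ ever needs extending, and the alternative colimit formulation) are sound but not needed.
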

	\begin{proof}
		Let $n$ be a natural number, $H$ a closed subgroup of $G$ and $f\in H^1(H,\mathbb{Z}_p/p\mathbb{Z}_p)=\operatorname{Hom}(H,\mathbb{Z}_p/p\mathbb{Z}_p)$ . By Lemma \ref{from_closed_to_open}, any homomorphism from $H$ to $\mathbb{Z}_p/p$ can by lifted to a homomorphism from some open subgroup $U$ of $G$. By assumption there exist a crossed homomorphism $\alpha\in H^1(U,\mathbb{Z}_p(1)/p^n)$ with projects on this homomorphism. So the restriction of $\alpha$ to $H$ will do.
	\end{proof}
	\begin{proof}[Proof of Theorem \ref{Main theorem 2}]
		
		First we lift the orientation of $G$ to $\hat{G}$. 
		
		Let $n$ be a natural number. We will prove that the kernel of the induced map $H^2(\hat{U},\mathbb{Z}_p/p^{n-1})\to H^2(\hat{U},\mathbb{Z}_p/p^{n})$ is trivial, for every finite index subgroup $U\leq_f G$. This will prove that every open subgroup of $\hat{G}$ satisfies the required surjectivity, and thus by Lemma \ref{from_closed_to_open_2} we will get that $\hat{G}$ is cyclotomic.  
		
		Let $\alpha\in H^2(\hat{U},\mathbb{Z}_p/p^{n-1})$ be a cocycle which lays in the kernel. Look at its restriction to $U$. This is a cocycle in the noncontinuous second cohomology group of $U$ which lays in the kernel of the induced map $H^2_{abs}(U,\mathbb{Z}_p/p^{n-1})\to H^2_{abs}(U,\mathbb{Z}_p/p^n)$. So, we wish to prove that this kernel is trivial as well. By identical proof of Lemma \refeq{H2_trivial} this is equivalent to the surjectivity of $H^1_{abs}(U,\mathbb{Z}_p(1)/p^n\mathbb{Z}_p(1))\to H^1_{abs}(U,\mathbb{Z}_p(1)/p\mathbb{Z}_p(1))$. So, let $$f\in H^1_{abs}(U,\mathbb{Z}_p(1)/p\mathbb{Z}_p(1))=Hom_{abs}(U,\mathbb{Z}_p(1)/p\mathbb{Z}_p(1))$$ By Nikolov\&Segal Theorem, for all finitely generated closed subgroup $H$ of $U$, the restriction of $f$ is continuous, and thus by cyclotomicy of $G$ admits an origin which is a crossed homomorphism $\varphi\in H^1(H,\mathbb{Z}_p(1)/p^n)$. The number of continuous crossed homomorphisms from a f.g profinite group to a finite module is finite since it depends only on the values of the generators. Also, the restriction of such an origin is an origin as well, and so is the union, thus by identical proof of Proposition \ref{key tool}, $f$ admits an origin in $H^1(U,\mathbb{Z}_p(1)/p^n)$.
		
		Thus, $\alpha|_U$ is trivial. We want to show that $\alpha$ is trivial too. Notice that the origin of $\alpha|_U$ in $C^1(U,\mathbb{Z}/p^{n-1})$, which makes it trivial, can not trivially be lifted to $\hat{U}$ since it is not a homomorphism. Let 
		\[
		\xymatrix@R=14pt{ 1\ar[r]& \mathbb{Z}/p^{n-1}\ar[r]& K\ar[r] & \hat{U} \ar[r] &1
		}
		\]
		be the extension corresponding to $\alpha$. We shall show that $K\cong  \mathbb{Z}/p^n \rtimes \hat{U}$ with the given operation of $\hat{U}$ on $ \mathbb{Z}/p^{n-1}$. Let 
		\[
		\xymatrix@R=14pt{ 1\ar[r]& \mathbb{Z}/p^n\ar[r]& C\ar[r] & U \ar[r] &1
		}
		\]
		be the extension corresponding to $\alpha|_U$. By assumption $C\cong \mathbb{Z}/p^{n-1}\rtimes U$. Since the operation of $\hat{U}$ on $\mathbb{Z}/p^{n-1}$ is the extension of this of $U$, one easily checks that $\widehat{\mathbb{Z}/p^{n-1}\rtimes G}\cong \mathbb{Z}/p^n\rtimes \hat{U}$. Thus, the only thing left to show is that $\hat{C}\cong K$. Since $C$ is residually finite it induces on $\mathbb{Z}/p^{n-1}$ its complete profinite topology, so the exact sequence remains exact after taking profinite completion (See \cite[Proposition 3.2.5 and Lemma 3.2.6]{ribes2000profinite}).
		
		\[
		\xymatrix@R=14pt{ 1\ar[r]& \mathbb{Z}/p^{n-1}\ar[r]& \hat{C}\ar[r] & \hat{U} \ar[r] &1
		}
		\]    
		Let $\beta$ be the corresponding element in $H^2(\hat{U},\mathbb{Z}/p^{n-1})$. We want to show that $\alpha=\beta$. Since $U\times U$ is dense in $\hat{U}\times \hat{U}$ it is enough to show that $\alpha|_U=\beta|_U$.
		
		Recall that $\beta(g_1,g_2)=\sigma(g_1)+\sigma(g_2)-\sigma(g_1g_1)$ when $\sigma:\hat{U}\to \hat{C}$ is any continuous section.
		
		We will construct a continuous section $\sigma:\hat{U}\to \hat{C}$ whose restriction to $U$ will be a section from $C$ to $U$. For that, choose $O$ to be an open subgroup of $\hat{C}$ such that $O\cap \mathbb{Z}/p^{n-1}=\{e\}$. The same holds for $O\cap C\leq C$. Thus, $O\cap C\cong \operatorname{Im}(O\cap C)$ and $O=\overline{O\cap C}\cong \widehat{\operatorname{Im}(O\cap C)}=\operatorname{Im}(O)$. So we can chose a continuous isomorphism $\varphi:\operatorname{Im} (O)\to O$ which sends $\operatorname{Im}(O\cap C)\to U\cap C$. Now choose a transversal $\{g_1,..,g_n\}$ of $\operatorname{Im}(O\cap C)$ in $U$. By \cite[Proposition 3.2.2]{ribes2000profinite} this is also a transversal of $O$ in $\hat{U}$. For each $g_i$ chose an origin $c_i$ in $C$ and define $\sigma(g_iu)=c_i\varphi^{-1}(u)$. We get that $\sigma|_U$ is a section from $U$ to $C$. So for all $g_1,g_2\in U$   $\beta(g_1,g_2)=\sigma(g_1)+\sigma(g_2)-\sigma(g_1g_1)=\alpha|_U(g_1,g_2)$. Since $\beta$ and $\alpha$ are functions to a Hausdorff space which identify on the dense subset $U\times U$, they are equal. In conclusion, we get that $\alpha$ is trivial. 
	\end{proof}

\subsection{The cup-product exact sequence}
Let $G=G_F$ be the absolute Galois group of a field $F$ containing a primitive $p$'th root of unity. For every $\chi\in H^1(G,\mathbb{F}_p)=\operatorname{Hom}(G,\mathbb{F}_p)$ one has the following identities: $H^1(G,\mathbb{F}_p)\cong F^{\times}/(F^{\times})^p$,  $H^1(\ker(\chi),\mathbb{F}_p)\cong L^{\times}/(L^{\times})^p$, $H^2(G,\mathbb{F}_p)\cong \operatorname{Br}(F)_p$ and $H^2(\ker(\chi),\mathbb{F}_p)\cong \operatorname{Br}(L)$ where $L=\bar{F}^{\ker(\chi)}$ and $\operatorname{Br}(-)_p$ stands for the $p$ - torsion part of the Brauer group over a given field. Thus, the known isomorphism $F^{\times}/N_{L/F}(L^{\times})\cong Br(L/F)$ gives rises to the following exact sequence (for the proof look at \cite[pg. 73 Theorem 1]{draxl1983skew}): 

	\begin{tikzcd}
		H^1(\ker(\chi),\mathbb{F}_p) \arrow[r,"\operatorname{Cor}_G"]& H^1(G,\mathbb{F}_p) \arrow[r,"\chi \cup"]& H^2(G,\mathbb{F}_p)  \arrow[r,"\operatorname{Res}_{\ker(\chi)}"]&[1.2em]  H^2(\ker(\chi),\mathbb{F}_p) 
	\end{tikzcd}

We say that a profinite group $G$ which acts trivial on $\mathbb{F}_p$ admits the cup product exact sequence property (CPESP) if for every $\chi\in H^1(G, \mathbb{F}_p)$ the above sequence is exact. Thus, every absolute Galois group admits the CPESP. 
\begin{rem}
	Notice that the cup product sequence is always a complex.
\end{rem}
\begin{thm}\label{Mian_theorem_3}
Let $G$ be a profinite group acting trivially on $\mathbb{F}_p$. Assume that every closed subgroup of $G$ admits the CPESP. Then the same holds for $\hat{G}$.
\end{thm}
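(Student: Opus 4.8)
The strategy mirrors the proofs of Theorems~\ref{main theorm 1} and~\ref{Main theorem 2}. First I would reduce to open subgroups of $\hat G$: given a closed subgroup $H$ and a continuous $\chi\colon H\to\mathbb{F}_p$ (the case $\chi=0$ being trivial, since then the four-term sequence has identity maps at the two ends and the zero map in the middle), Lemma~\ref{from_closed_to_open} lifts $\chi$ to a continuous $\tilde\chi$ on some open $U_0\supseteq H$, and the open subgroups $V$ with $H\le V\le U_0$, equipped with the characters $\tilde\chi|_V$, are cofinal among all open subgroups containing $H$; the subgroups $V\cap\ker\tilde\chi$ are then cofinal among open subgroups containing $\ker\chi$, so the four-term sequences attached to the pairs $(V,\tilde\chi|_V)$ form a direct system whose colimit is the sequence of $(H,\chi)$, and since a filtered colimit of exact sequences is exact, it suffices to prove exactness for an open $\hat U\cong\bar U$ with $U\le_f G$. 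Fixing a continuous $\chi\colon\hat U\to\mathbb{F}_p$, writing $\bar\chi=\chi|_U$ and $N=\ker\bar\chi\le_f G$ (so $\ker\chi=\bar N\cong\hat N$), we may assume $\chi\neq0$, hence $[\hat U:\ker\chi]=[U:N]=p$; fix $t\in U$ with $\bar\chi(t)=1$. Exactness at $H^1(\hat U,\mathbb{F}_p)$ and at $H^2(\hat U,\mathbb{F}_p)$ are then treated in turn.

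For $H^1$: given continuous $\psi\colon\hat U\to\mathbb{F}_p$ with $\chi\cup\psi=0$ in $H^2(\hat U,\mathbb{F}_p)$, the goal is a continuous $\eta\colon\ker\chi\to\mathbb{F}_p$ with $\operatorname{Cor}(\eta)=\psi$. With respect to the transversal $1,t,\dots,t^{p-1}$ of $N$ in $U$, the identity $\operatorname{Cor}(\eta)=\psi$ is equivalent to the pointwise relations $\psi(n)=\sum_{j}\eta(t^jnt^{-j})$ for $n\in N$ and $\psi(t)=\eta(t^p)$; these define a property $\mathcal{C}$ of functions to $\mathbb{F}_p$ which is preserved under reduction and union, so Proposition~\ref{key tool} applies with the finite-index subgroup $N$ and $A=\mathbb{F}_p$. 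For a finitely generated closed $P\le_c N$ put $K=\overline{\langle P\cup\{t\}\rangle}\le_c G$; then $t\in K$, so $1,\dots,t^{p-1}$ is also a transversal of $K\cap N$ in $K$, the homomorphisms $\chi|_K,\psi|_K$ are continuous by the Nikolov\&Segal Theorem with $\chi|_K\cup\psi|_K=(\chi\cup\psi)|_K=0$, and by the CPESP of $K$ we get $\psi|_K=\operatorname{Cor}(\eta_K)$ for a continuous $\eta_K\colon K\cap N\to\mathbb{F}_p$; then $\eta_K|_P$ is the member of $\mathcal{C}_P$ required by Proposition~\ref{key tool}. The proposition yields an abstract $\bar\eta\colon N\to\mathbb{F}_p$ with $\operatorname{Cor}(\bar\eta)=\bar\psi$; extending $\bar\eta$ continuously to $\hat N=\ker\chi$ and using that corestriction is natural for the compatible pair $U\hookrightarrow\hat U$, $N\hookrightarrow\ker\chi$, the continuous homomorphisms $\operatorname{Cor}(\eta)$ and $\psi$ agree on the dense subgroup $U$ and are therefore equal.

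For $H^2$: given a continuous $2$-cocycle $\gamma$ on $\hat U$ with $\operatorname{Res}_{\ker\chi}[\gamma]=0$, the goal is a continuous homomorphism $\psi\colon\hat U\to\mathbb{F}_p$ with $[\gamma]=[\chi\cup\psi]$. Here I would run Proposition~\ref{key tool} at the cochain level, with target $\mathbb{F}_p\times\mathbb{F}_p$ and the property ``$(\rho,c)$: $\rho$ is a homomorphism and the restriction of $\gamma$ equals $\bar\chi\cup\rho+\partial c$ as cochains'', which is again local, hence preserved under reduction and union; over a finitely generated closed $K\le_c U$ one has $\gamma|_{K\cap\ker\chi}=(\operatorname{Res}_{\ker\chi}\gamma)|_{K\cap\ker\chi}=0$, so $[\gamma|_K]\in\ker(\operatorname{Res}_{\ker(\chi|_K)})$ and the CPESP of $K$ supplies the pair. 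The proposition then produces an abstract homomorphism $\bar\psi\colon U\to\mathbb{F}_p$ and an abstract cochain $\bar c$ with $\gamma|_U=\bar\chi\cup\bar\psi+\partial\bar c$; taking $\psi$ to be the continuous extension of $\bar\psi$, this says that $[(\gamma-\chi\cup\psi)|_U]=0$ in $H^2_{abs}(U,\mathbb{F}_p)$. The remaining task --- and, I expect, the main obstacle --- is to promote this to $[\gamma-\chi\cup\psi]=0$ in $H^2(\hat U,\mathbb{F}_p)$, which I would handle exactly as in the proof of Theorem~\ref{Main theorem 2}: writing $\delta=\gamma-\chi\cup\psi$ and letting $1\to\mathbb{F}_p\to\mathcal{K}\to\hat U\to1$ be the associated profinite central extension with projection $\pi$, the abstract subextension $E=\pi^{-1}(U)$ is a central extension of $U$ by $\mathbb{F}_p$ whose class $[\delta|_U]$ vanishes, so $E\cong\mathbb{F}_p\times U$ and hence $\widehat E\cong\mathbb{F}_p\times\hat U$ is split; by \cite[Proposition 3.2.5 and Lemma 3.2.6]{ribes2000profinite}, $E$ being residually finite and $\mathbb{F}_p$ finite, $1\to\mathbb{F}_p\to\widehat E\to\hat U\to1$ is again exact, and the continuous homomorphism $\widehat E\to\mathcal{K}$ extending $E\hookrightarrow\mathcal{K}$ is a morphism of extensions over the identity of $\hat U$ and of $\mathbb{F}_p$, hence an isomorphism. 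Therefore $[\delta]=[\mathcal{K}]=[\widehat E]=0$, i.e.\ $[\gamma]=[\chi\cup\psi]$, and the proof is complete.
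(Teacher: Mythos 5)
Your proposal is correct, and its core is the same as the paper's: reduce to open subgroups $\bar U\cong\hat U$ of $\hat G$, restrict everything to the dense finite-index subgroup $U\leq_f G$, use the Nikolov--Segal theorem to make the restrictions to finitely generated closed subgroups continuous so that the CPESP hypothesis applies, glue by the compactness argument of Proposition \ref{key tool}, and return to $\hat U$ by density. There are, however, two places where your route genuinely differs, both to your credit. First, your reduction from closed to open subgroups packages the paper's element-by-element liftings (via Lemma \ref{from_closed_to_open}, applied separately to $\chi$, to $\rho$ or $\alpha$, and to the coboundary witness, followed by an intersection of the resulting open subgroups) into the single statement that the four-term sequence for $(H,\chi)$ is the filtered colimit of the sequences for $(V,\tilde\chi|_V)$; this is cleaner, though you should note that the compatibility of $\operatorname{Cor}$ with the transition maps uses $V=V'(V\cap\ker\tilde\chi)$ for $V'\leq V$. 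Second, and more substantially, at the exactness in $H^2$ the paper only records, for each finitely generated closed $H$, the existence of $\rho_H$ with $\operatorname{Res}_H(\chi)\cup\rho_H=\operatorname{Res}_H(\alpha)$ as an identity of \emph{cohomology classes}, and then concludes $\chi\cup\hat\rho=\alpha$ ``by identification on the dense subset $U\times U$''; since cohomologous cocycles need not agree pointwise, that density step does not literally close the argument. You correctly identify this as the main obstacle and repair it exactly as the paper handles the analogous point in Theorem \ref{Main theorem 2}: carry the coboundary witness along at the cochain level so as to obtain $[(\gamma-\chi\cup\psi)|_U]=0$ in $H^2_{abs}(U,\mathbb{F}_p)$, and then transfer this to $H^2(\hat U,\mathbb{F}_p)$ through the associated central extension and its profinite completion. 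The one point you should make explicit when invoking Proposition \ref{key tool} for the pairs $(\rho,c)$ is why the sets $\mathcal{C}_K$ are finite (the proposition's proof needs an inverse system of nonempty \emph{finite} sets): for fixed $\rho$ the continuous cochains $c$ with $\partial c=\gamma|_K-\chi|_K\cup\rho$ form either the empty set or a torsor under $Z^1(K,\mathbb{F}_p)=\operatorname{Hom}(K,\mathbb{F}_p)$, which is finite for $K$ finitely generated, and there are only finitely many $\rho$. With that remark inserted, your argument is complete, and at the $H^2$ step it is in fact more careful than the proof in the paper.
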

\begin{lem}
	It is enough to prove the property for every open subgroup of $\hat{G}$. 
\end{lem}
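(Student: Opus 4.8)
The plan is to follow the pattern of the earlier closed-to-open reductions, but with one extra ingredient. Unlike the Massey-product and cyclotomicity properties, the CPESP asserts \emph{exactness} of a complex rather than the existence of a map with a hereditary property, and restriction does not obviously preserve exactness; so instead of restricting a single ``solution'' back to a closed subgroup, I would realize the continuous cohomology of a closed subgroup as a filtered colimit of the cohomologies of the open subgroups containing it, and pass exactness through the colimit.

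First I would take a closed subgroup $H\leq_c\hat G$ and a character $\chi\in H^1(H,\mathbb F_p)=\operatorname{Hom}(H,\mathbb F_p)$. We may assume $\chi$ is surjective, since for $\chi=0$ we have $\ker\chi=H$ and the sequence becomes $H^1(H,\mathbb F_p)\xrightarrow{\operatorname{id}}H^1(H,\mathbb F_p)\xrightarrow{0}H^2(H,\mathbb F_p)\xrightarrow{\operatorname{id}}H^2(H,\mathbb F_p)$, which is exact. By Lemma \ref{from_closed_to_open}, $\chi$ extends to a continuous (hence surjective) homomorphism $\tilde\chi\colon U_0\to\mathbb F_p$ for some open $U_0$ with $H\leq U_0\leq\hat G$. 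Let $\{U_\alpha\}$ be the downward-directed family of all open subgroups with $H\leq U_\alpha\leq U_0$, so that $\bigcap_\alpha U_\alpha=H$, and set $\chi_\alpha:=\tilde\chi|_{U_\alpha}$. Then each $\ker\chi_\alpha=\ker\tilde\chi\cap U_\alpha$ is open of index $p$ in $U_\alpha$, these subgroups are again downward directed, and $\bigcap_\alpha\ker\chi_\alpha=\ker\tilde\chi\cap H=\ker\chi$.

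The heart of the argument is that, as $\alpha$ varies, the cup-product complexes attached to the pairs $(U_\alpha,\chi_\alpha)$ form a directed system under restriction whose colimit is the cup-product complex of $(H,\chi)$. Compatibility of the cup-product maps $\chi_\alpha\cup(-)$ and of the restriction maps $\operatorname{Res}_{\ker\chi_\alpha}$ with the transition maps is immediate, because restriction is a transitive ring homomorphism sending $\chi_\alpha$ to $\chi_\beta$. The delicate point is compatibility of the corestrictions $\operatorname{Cor}\colon H^1(\ker\chi_\alpha,\mathbb F_p)\to H^1(U_\alpha,\mathbb F_p)$ with restriction from $U_\alpha$ down to a smaller $U_\beta$; this is an instance of the Mackey double-coset formula, which collapses here because $\ker\chi_\alpha$ is \emph{normal} of prime index in $U_\alpha$ while $U_\beta\not\subseteq\ker\chi_\alpha$ (as $\chi|_{U_\beta}\neq0$), so $U_\beta\backslash U_\alpha/\ker\chi_\alpha$ consists of a single double coset with trivial conjugation and the formula becomes the identity $\operatorname{Res}\circ\operatorname{Cor}=\operatorname{Cor}\circ\operatorname{Res}$ relating the two complexes, using $\ker\chi_\beta=\ker\chi_\alpha\cap U_\beta$. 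I expect this Mackey bookkeeping to be the only real technical point; everything else is formal.

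Finally, recall that continuous cohomology with finite coefficients turns a filtered intersection of open subgroups into a filtered colimit, so $H^i(H,\mathbb F_p)=\varinjlim_\alpha H^i(U_\alpha,\mathbb F_p)$ and $H^i(\ker\chi,\mathbb F_p)=\varinjlim_\alpha H^i(\ker\chi_\alpha,\mathbb F_p)$, compatibly with all the maps above. A filtered colimit of exact sequences is exact; and by hypothesis every open subgroup of $\hat G$, in particular each $U_\alpha$, satisfies the CPESP, so the cup-product complex of $(U_\alpha,\chi_\alpha)$ is exact for every $\alpha$. Hence its colimit, the cup-product complex of $(H,\chi)$, is exact, which is precisely the CPESP for $H$. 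Since $H\leq_c\hat G$ and $\chi$ were arbitrary, it indeed suffices to establish the CPESP for all open subgroups of $\hat G$, proving the lemma.
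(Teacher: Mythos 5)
Your proof is correct, but it takes a genuinely different route from the paper's. The paper argues pointwise: given $\chi,\rho$ (or $\alpha$) on a closed subgroup $H$, it lifts $\chi$, $\rho$ and a cochain witness of $\chi\cup\rho=0$ (resp.\ of $\operatorname{Res}_{\ker\chi}\alpha=0$) to a single open subgroup $O\supseteq H$ via Lemma \ref{from_closed_to_open}, applies the CPESP there, and restricts the resulting preimage back to $H$; the identity $\operatorname{Res}\circ\operatorname{Cor}=\operatorname{Cor}\circ\operatorname{Res}$ needed for the corestriction step is left implicit (``will do''). You instead realize the entire four-term complex for $(H,\chi)$ as the filtered colimit of the complexes for $(U_\alpha,\chi_\alpha)$ over the open $U_\alpha$ with $H\leq U_\alpha\leq U_0$, using $H^i(H,\mathbb{F}_p)=\varinjlim_\alpha H^i(U_\alpha,\mathbb{F}_p)$ (Serre, \emph{Galois Cohomology} I.2.2, or \cite[Cor.~on cohomology of intersections]{ribes2000profinite}) together with exactness of filtered colimits. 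Your approach buys two things: it handles the ``witness'' issue cleanly (the paper's independently chosen lifts $\bar\chi,\bar\rho,\bar f$ need not satisfy $\partial\bar f=\bar\chi\cup\bar\rho$ on $O$ without a further shrinking, which the colimit isomorphism supplies automatically), and it makes explicit, via the single-double-coset Mackey computation using $U_\beta\ker\chi_\alpha=U_\alpha$, exactly the compatibility the paper glosses over. The cost is invoking the colimit isomorphism as a black box where the paper works directly with cochains. One small point to make fully explicit: you also need the same Mackey square for the transition from each $U_\alpha$ down to the closed subgroup $H$ itself (not only between two open $U_\alpha\supseteq U_\beta$), so that the colimit of the corestrictions is identified with $\operatorname{Cor}^H_{\ker\chi}$; the identical computation ($H\ker\chi_\alpha=U_\alpha$ and $H\cap\ker\chi_\alpha=\ker\chi$) gives it, so this is a matter of wording rather than a gap.
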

\begin{proof}
	Let $H$ be a closed subgroup of $G$ and $\chi\in H^1(H,\mathbb{F}_p)$. Recall that since $G$ acts trivially on $\mathbb{F}_p$, $H^1(H,\mathbb{F}_p)\cong \operatorname{Hom}(H,\mathbb{F}_p)$. Thus, by Lemma \ref{from_closed_to_open} $\chi$ can be lifted to some open subgroup $U\leq_oG$. Let $\rho\in H^1(H,\mathbb{F}_p)$ be an element such that $\chi\cup\rho=0$. By Lemma \ref{from_closed_to_open} again there is an open subgroup $U'$ such that $\rho$ can be lifted to $U'$. $\chi\cup\rho=0$ means that there is some $f\in C^1(H,\mathbb{F}_p)$ such that $\partial(f)=\chi\cup\rho$. Since $f$ is a continuous function from a profinite space to a finite space, it projects through some finite quotient of $H$ (see \cite{ribes2000profinite} Proposition 1.1.16). I.e, there exists a finite group $A$, a continuous homomorphism $f':H\to A$ and a continuous function $f'':A\to \mathbb{F}_p$ such that $f=f''\circ f'$. So again, there exists a lifting of $f'$, and thus of $f$, to an open subgroup $U''$ of $G$. Define $O=U\cap U'\cap U''$, we get that $O$ is an open subgroup of $G$ for which $\chi$ and $\rho$ can be lifted, denote these lifting by $\bar{\chi}$ and $\bar{\rho}$, and $\bar{\chi}\cup \bar{\rho}$ is trivial. By assumption, $\bar{\rho}=\operatorname{Cor}_O\rho'$ for some $\rho'\in H^1(\ker(\chi)_p),\mathbb{F}_p)$. Taking $\rho''=\operatorname{Res}_{H\cap \ker(\chi)}(\rho)$ will do.

For the second exactness, let $\alpha\in H^2(H,\mathbb{F}_p)$ such that $\operatorname{Res}_{\ker(\chi)}(\alpha)=0$. By the same arguments, there exists some open subgroup $O\leq_o G$ containing $H$ such that $\alpha$ and $\chi$ can be lifted to, and $\bar{\alpha}$, the lifting of $\alpha$, is trivial. By assumption, $\bar{\alpha}=\bar{\chi}\cup \rho$ for some $\rho\in H^1(O,\mathbb{F}_p)$, when $\bar{\chi}$ denotes the lifting of $\chi$ to $O$. Then $\operatorname{Res}_C(\rho)$ will prove the exactness. 
\end{proof}
\begin{proof}[Proof of Theorem \ref{Mian_theorem_3}]
	Let $\hat{U}$ be an open subgroup of $\hat{G}$, and $\chi\in H^1(\hat{U},\mathbb{F}_p)$ a continuous homomorphism. We will prove the exactness of the following series:
	
		\begin{tikzcd}
		H^1(\ker(\chi),\mathbb{F}_p) \arrow[r,"\operatorname{Cor}_{\hat{U}}"]& H^1(\hat{U},\mathbb{F}_p) \arrow[r,"\chi \cup"]& H^2(\hat{U},\mathbb{F}_p)  \arrow[r,"\operatorname{Res}_{\ker(\chi)}"]&[1.2em]  H^2(\ker(\chi),\mathbb{F}_p) 
	\end{tikzcd}
\begin{itemize}
	\item \textit{Exactness at $H^1(\hat{U},\mathbb{F}_p)$.} Let $\rho\in H^1(\hat{U},\mathbb{F}_p)$ be such that $\chi\cup\rho=0$. Let us look at the restrictions to $U$, these are not-necessarily-continuous homomorphisms $U\to \mathbb{F}_p$. By Nikolov\&Segal Theorem, for every f.g closed subgroup $H$ of $U$, $\operatorname{Res}_H(\chi)$ and $\operatorname{Res}_H(\rho)$ are continuous homomorphsims. Moreover, taking the restriction of $f$ to $H$, for $f\in C^1(\hat{U},\mathbb{F}_p)$ such that $\partial(f)=\chi\cup\rho$ implies that $\operatorname{Res}_H(\chi)\cup\operatorname{Res}_H(\rho)=0$. So by assumption $\operatorname{Res}_H(\rho)=\operatorname{Cor}_H(\rho'_H)$ for some $\rho'\in H^1(\ker(\operatorname{Res}_H(\chi)),\mathbb{F}_p)$. Since $H$ is f.g then $H\cap \ker(\chi)$ is open in $H$. Thus, by \cite[Corollary 2.5.5]{ribes2000profinite} $H\cap \ker(\chi)$ is f.g too.  Moreover, if $H\leq K$ are finitely generated closed subgroup of $U$ then for all $\rho'_K$ such that $ \operatorname{Res}_K(\rho)=\operatorname{Cor}_K(\rho'_K)$ one has $ \operatorname{Res}_H(\rho)=\operatorname{Cor}_H(\operatorname{Res}_H(\rho'_K))$, and this property also preserved under union, so by Proposition \ref{key tool} there is an element $\rho'\in H^1_{abs}(\ker(\chi|_U),\mathbb{F}_p)$ which is an inverse limits of all these homomorphosms. This follows from the fact that $U$ equals to the union of all its f.g closed subgroups $H$ and thus $\ker(\chi|_U)=\bigcup \ker(\operatorname{Res}_H(\chi))$. Moreover, it is easy to see that $\rho=\operatorname{Cor}_U(\rho')$. Since $H^1_{abs}(U,\mathbb{F}_p)=\operatorname{Hom}_{abs}(U,\mathbb{F}_p)$, $\rho'$ can be lifted to a continuous homomorphism $\hat{\rho'}:\hat{\ker(\chi)}\to \mathbb{F}_p$. The only thing left to show is that $\rho=\operatorname{Cor}_{\hat{U}}(\hat{\rho'})$. For that choose a transversal $\{u_1,...,u_n\}$ of $\ker(\chi|_U)$ in $U$. By [Proposition 3.2.2]\cite{ribes2000profinite} this is also a transversal of $\ker(\chi)$ in $U$. Recall that for each $x\in \hat{U}$ $\operatorname{Cor}_{\hat{U}}(\hat{\rho'})(x)=\sum_{u_i}\hat{\rho'}(u_i^{-1}x\widetilde{x^{-1}u_i})$ where $ \widetilde{x^{-1}u_i}$ denotes the representative of the coset of $x^{-1}u_i$ from the chosen set of transversal. An analogue of this interpretation holds for $\operatorname{Cor}_U(\rho')$. So $\rho$ and $ \operatorname{Cor}_{\hat{U}}(\hat{\rho'})$ identify on the dense subset $U$, and we are done. 
	
	\textit{Exactness at $H^2(\hat{U},\mathbb{F}_p)$}. Let $\alpha\in H^2(\hat{U},\mathbb{F}_p)$  be such that $\operatorname{Res}_{\ker(\chi)}(\alpha)=0$. Its restriction to $U$ is an element in the second abstract cohomology of $U$ whose restriction to $\ker(\chi|_U)$ is trivial. Since $\alpha$ is continuous, there is some finite continuous image $A$ of $\hat{U}$ such that $\alpha$ splits through the natural projection $\hat{U}\times\hat{U}\to A\times A$ (\cite[Proposition 1.1.6]{ribes2000profinite}). I.e, $\alpha=\operatorname{Inf}_{\hat{U}}(\alpha_A)$ where $\alpha_A\in H^2(A,\mathbb{F}_p)$. Thus $\operatorname{Res}_U(\alpha)$ is also inflated from $A$, and hence so is $\operatorname{Res}_H(\alpha)$ for every f.g closed subgroup $H$ of $U$. By Nikolov\&Segal Theorem we get that the reduced homomorphism $H\to A$ is continuous, so $\operatorname{Res}_H$ is a continuous cocycle. The same holds for $\operatorname{Res}_H(\chi)$. So by assumption there is some $\rho_H\in H^1(H,\mathbb{F}_p)$ such that $\operatorname{Res}_H(\chi)\cup \rho_H=\operatorname{Res}_H(\alpha)$. This property preserved under reduction and union, so by Proposition \ref{key tool} there is a cocycle $\rho\in H^1(U,\mathbb{F}_p)$ such that $\operatorname{Res}_U(\chi)\cup \rho=\operatorname{Res}_U\alpha$. Since $\rho$ is in fact a homomorphism it can be lifted to a continuous homomorphism $\hat{\rho}\in \operatorname{Hom}(U,\mathbb{F}_p)=H^1(U,\mathbb{F}_p)$. So the only thing left to show is that $\chi\cup\hat{\rho}=\alpha$. But this follows from their identification on the dense subset $U\times U$.  
\end{itemize}
\end{proof}
\subsection{Completions of higher order}
In \cite{BarOn2018tower} the author presented the infinite tower of profinite completions which is defined as follows: Let $G$ be a nonstrongly complete profinite group and $\alpha$ an ordinal. Denote $G_0=G$.

If $\alpha=\beta+1$ is a successor ordinal then $$G_{\alpha}=\widehat{G_{\beta}}$$ and if $\alpha$ is limit then $$G_{\alpha}=\widehat{H_{\alpha}}$$ when $H_{\alpha}=\widehat{{\dirlim}_{\beta<\alpha}G_{\beta}}$. The chain is equipped with natural competible homomorphisms $\varphi_{\beta\alpha}:G_{\beta}\to G_{\alpha}$. It has been proven that all the groups in this chain are nonstrongly complete and the maps are injective, and thus this chain grows indefinetely. In \cite{BarOn+2021} the local weights of all the groups in the chain have been computed and discovered to stricktly increase. In this paper we treat to $G_{\alpha}$ as the $\alpha$'th completion of $G$ and we want to prove that if $G$ possess some property of absolute Galois groups that was stated above, then so does $G_{\alpha}$ for every ordinal $\alpha$.  First we need the following lemma:  
	\begin{lem}\label{locally profinite}
		Let $\alpha$ be a limit ordinal and $U$ be a finite index subgroup of $H_{\alpha}=\widehat{{\dirlim}_{\beta<\alpha}G_{\beta}}$. Then $U$ is locally profinite and more precisely every f.g abstract subgroup of $U$ is contained in a closed subgroup of some $G_{\beta}$ for $\beta<\alpha$ which in turn is contained in $U$.
	\end{lem}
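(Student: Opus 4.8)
The plan is to reduce the statement to an application of the correspondence between finite-index subgroups of an abstract group and open subgroups of its profinite completion, together with the structure of $H_\alpha$ as an iterated completion. First I would unpack the definition: $H_\alpha = \widehat{D}$ where $D = \dirlim_{\beta<\alpha} G_\beta$, and each $G_\beta$ sits inside $D$ via the compatible maps $\varphi_{\beta\alpha}$. Since every $G_\beta$ is residually finite (being profinite), $D$ is a directed union of residually finite groups; I would first check that $D$ itself is residually finite so that each $G_\beta$ embeds into $H_\alpha = \widehat D$ as a subgroup, and moreover that the closure of $G_\beta$ inside $H_\alpha$ is (isomorphic to) $G_\beta$ again — this uses that $G_\beta$ is already profinite, hence compact, hence closed in $\widehat D$, so $\overline{G_\beta} = G_\beta$ inside $H_\alpha$. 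The key point is that $D = \bigcup_{\beta<\alpha} G_\beta$ is dense in $H_\alpha$.

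Next I would bring in the finite-index subgroup $U \leq_f H_\alpha$. Let $x_1,\dots,x_n$ be an arbitrary finite abstract subset of $U$, and set $K = \overline{\langle x_1,\dots,x_n\rangle}$, the closed subgroup of $H_\alpha$ they topologically generate. By Lemma~\ref{locally_closed} applied to the profinite group $H_\alpha$ and its finite-index subgroup $U$, we have $K \subseteq U$. So the abstract subgroup $\langle x_1,\dots,x_n\rangle$ is contained in the closed subgroup $K$ of $H_\alpha$, and $K \subseteq U$. It remains to locate $K$ inside some $G_\beta$. Since $K$ is finitely generated (topologically) and $D = \bigcup_\beta G_\beta$ is dense in $H_\alpha$, each generator $x_i$ can be approximated: more carefully, I would use density together with the fact that a finitely generated closed subgroup of $H_\alpha = \widehat D$ is the closure of a finitely generated abstract subgroup of $D$ — pick $d_1,\dots,d_n \in D$ close enough to $x_1,\dots,x_n$ (in a small enough open subgroup) that $\overline{\langle d_1,\dots,d_n\rangle} = \overline{\langle x_1,\dots,x_n\rangle} = K$; this is possible because $\langle x_i\rangle$ and $\langle d_i\rangle$ generate the same open subgroups of $K$ when the $d_i$ are chosen in a sufficiently small neighborhood, exploiting that $K$ is finitely generated hence strongly complete by Nikolov--Segal. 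Since $D$ is a directed union, $d_1,\dots,d_n$ all lie in a common $G_\beta$ for some $\beta<\alpha$, so $\langle d_1,\dots,d_n\rangle \subseteq G_\beta$, and as $G_\beta$ is closed in $H_\alpha$, $K = \overline{\langle d_1,\dots,d_n\rangle} \subseteq G_\beta$. Finally $K \subseteq G_\beta \cap U$, which is what "contained in a closed subgroup of some $G_\beta$ which in turn is contained in $U$" should be read as — though one should note $K \subseteq U$ and $K \subseteq G_\beta$ is exactly the claim, the "in turn contained in $U$" referring to the ambient relationship via $K$ itself, or alternatively one replaces $G_\beta$ by the finite-index subgroup $G_\beta \cap U$ of $G_\beta$ which is closed in $G_\beta$.

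The main obstacle I anticipate is the step showing that a finitely generated (topologically) closed subgroup $K$ of $\widehat D$ can be conjugated/approximated into the dense copy of $D$, i.e. that $K = \overline{\langle d_1,\dots,d_n\rangle}$ for suitable $d_i \in D$ lying in a common $G_\beta$. The delicate point is that picking $d_i$ near $x_i$ does not a priori guarantee they generate the \emph{same} closed subgroup — only a subgroup with the same image in each finite quotient through which we have controlled the approximation. The honest way to handle this is to observe that since $K$ is finitely generated profinite it has a countable descending neighborhood basis of open normal subgroups, run a back-and-forth/successive-approximation argument choosing the $d_i$ to agree with $x_i$ modulo deeper and deeper open normal subgroups of $K$, and use that this forces $\langle d_i \rangle$ to be dense in $K$; directedness of the union defining $D$ then still places the finitely many $d_i$ in one $G_\beta$. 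This is essentially the same mechanism as in Lemma~\ref{locally_closed}, now combined with density of $D$ and the compatibility of the maps $\varphi_{\beta\alpha}$.
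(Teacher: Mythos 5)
Your opening step is fine, but the second half of your argument contains a genuine gap, and it is exactly the step you yourself flag as delicate. It is false in general that a topologically finitely generated closed subgroup $K$ of $\widehat{D}$ is the closure of a finitely generated subgroup of the dense subgroup $D$, and no amount of approximating the generators $x_i$ by elements $d_i\in D$ modulo deeper and deeper open subgroups can fix this. Concretely, take $D=\mathbb{Z}$ and $\widehat{D}=\widehat{\mathbb{Z}}=\prod_q\mathbb{Z}_q$: the procyclic closed subgroup $K=\mathbb{Z}_2\times\prod_{q\neq 2}\{0\}$ is topologically generated by the idempotent $(1,0,0,\dots)$, yet the closures of subgroups of $\mathbb{Z}$ are exactly the subgroups $n\widehat{\mathbb{Z}}$ (open or trivial), none of which equals $K$; moreover any integer $d$ congruent to that generator modulo a small open subgroup is a nonzero integer and hence topologically generates an open subgroup. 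So neither the ``choose $d_i$ close enough'' step nor the proposed back-and-forth refinement can produce $d_i\in D$ with $\overline{\langle d_1,\dots,d_n\rangle}=K$, and your route to placing $K$ inside some $G_\beta$ breaks down.

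The paper avoids this issue entirely by taking the generators $x_1,\dots,x_n$ of the finitely generated abstract subgroup to lie in the direct limit $\bigcup_{\beta<\alpha}G_\beta$ itself (equivalently, it reads $H_\alpha$ as the abstract direct limit rather than its completion; this is also the only reading consistent with how the lemma is invoked in the limit step of the transfinite induction). Then directedness gives a single $\beta<\alpha$ with $x_1,\dots,x_n\in G_\beta$, the subgroup $U\cap G_\beta$ has finite index in the profinite group $G_\beta$, and Lemma \ref{locally_closed} applied \emph{inside} $G_\beta$ yields that the closed subgroup of $G_\beta$ generated by $x_1,\dots,x_n$ is contained in $U\cap G_\beta$, hence (by injectivity of $\varphi_{\beta\alpha}$) in $U$. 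No approximation in $\widehat{D}$ is needed. If you insist on allowing the $x_i$ to be arbitrary elements of the completion, you are attempting a strictly stronger statement than the one the paper actually proves and uses, and your method does not establish it.
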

\begin{proof}
Let $x_1,...,x_n\in U$. There is some $\beta<\alpha$ such that $x_1,...,x_n\in G_{\beta}$. Thus $x_1,...,x_n\in U\cap G_{\beta}$ which is a finite index subgroup of $G_{\beta}$. So, by Lemma \ref{locally_closed} the closed subgroup generated by $x_1,..,x_n$ in $G_{\beta}$ is contained in $U\cap G_{\beta}$. Since $\varphi_{\beta\alpha}$ is injective it is also included in $U$. 
\end{proof}
\begin{prop}
	Let $\mathcal{C}$ be a property of profinite groups which satisfies the following conditions:
	\begin{itemize}
		\item Every closed subgroup of $G$ satisfies property $\mathcal{C}$ if and only if every open subgroup of $G$ does so.
		\item If $U$ is an abstract group for which every finitely generated abstract group is contained in a finitely generated profinite group satisfies $\mathcal{C}$ then $\hat{U}$ satisfies $\mathcal{C}$.
	\end{itemize}
then if $G$ is a nonstrongly complete profinite group for which every closed subgroup satisfies $\mathcal{C}$, then the same hold for $G_{\alpha}$, the $\alpha$'th completion of $G$, for every ordinal $\alpha$.
\end{prop}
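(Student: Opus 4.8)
The plan is a transfinite induction on $\alpha$, proving at each stage the statement $P(\alpha)$: every closed subgroup of $G_\alpha$ satisfies $\mathcal{C}$. The case $\alpha=0$ is precisely the hypothesis on $G=G_0$. In both the successor and the limit step one runs the same three-step reduction. First, because $\mathcal{C}$ satisfies the first bullet, $P(\alpha)$ is equivalent to saying that every \emph{open} subgroup of $G_\alpha$ satisfies $\mathcal{C}$. Second, in either case $G_\alpha$ is the profinite completion $\widehat{B}$ of a residually finite abstract group $B$ (namely $B=G_\beta$ in the successor case and $B=H_\alpha$ in the limit case), so by the correspondence recalled above between the open subgroups of $\widehat{B}$ and the finite index subgroups of $B$ — under which $U$ corresponds to $\bar U\cong\hat U$ — it is enough to show that $\hat U$ satisfies $\mathcal{C}$ for every finite index subgroup $U$ of $B$. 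Third, one applies the second bullet on $\mathcal{C}$ to the abstract group $U$, which reduces everything to exhibiting, for each finitely generated abstract subgroup $\langle x_1,\dots,x_n\rangle$ of $U$, a finitely generated profinite group that contains it and satisfies $\mathcal{C}$.

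For the successor step $\alpha=\beta+1$ we have $G_\alpha=\widehat{G_\beta}$, so $B=G_\beta$. Given $x_1,\dots,x_n$ in a finite index subgroup $U$ of $G_\beta$, Lemma~\ref{locally_closed} shows that $K:=\overline{\langle x_1,\dots,x_n\rangle}$, the closure taken in $G_\beta$, is a finitely generated closed subgroup of $G_\beta$ contained in $U$; being closed in a profinite group it is profinite, and by the induction hypothesis $P(\beta)$ it satisfies $\mathcal{C}$. Thus $K$ is the required witness, the second bullet gives that $\hat U$ satisfies $\mathcal{C}$, and therefore $P(\alpha)$ holds.

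For the limit step we have $G_\alpha=\widehat{H_\alpha}$ with $H_\alpha=\widehat{\dirlim_{\beta<\alpha}G_\beta}$, so $B=H_\alpha$. Given a finitely generated abstract subgroup of a finite index subgroup $U$ of $H_\alpha$, Lemma~\ref{locally profinite} (more precisely its proof, which takes the relevant subgroup to be the closure of a finite generating set) provides an ordinal $\beta<\alpha$ and a finitely generated closed subgroup $K$ of $G_\beta$ that contains this subgroup and is contained in $U$. Since $\beta<\alpha$, the induction hypothesis $P(\beta)$ applies and $K$ satisfies $\mathcal{C}$; as before the second bullet yields that $\hat U$ satisfies $\mathcal{C}$, so $P(\alpha)$ holds. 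This completes the induction, and in particular $G_\alpha$ itself satisfies $\mathcal{C}$ for every $\alpha$.

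I do not expect a genuine difficulty here: the argument is assembled entirely from the two closure conditions assumed of $\mathcal{C}$ and from Lemmas~\ref{locally_closed} and~\ref{locally profinite}, so the only real care is in the bookkeeping. The induction hypothesis must be carried in the strong form ``every \emph{closed} subgroup satisfies $\mathcal{C}$'', since the finitely generated subgroups produced by the two lemmas are closed in $G_\beta$ but in general neither open nor of finite index there; one must verify that these subgroups are at once finitely generated, profinite and contained in $U$, so that they are admissible witnesses for the second bullet; and in the limit step one must observe that the ordinal $\beta$ furnished by Lemma~\ref{locally profinite} is strictly below $\alpha$, so that $P(\beta)$ is indeed already established.
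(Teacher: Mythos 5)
Your proposal is correct and follows essentially the same route as the paper's own proof: a transfinite induction in which the first bullet reduces everything to open subgroups, the open-subgroup/finite-index correspondence reduces to completions $\hat U$, and the second bullet is verified via Lemma~\ref{locally_closed} at successor steps and Lemma~\ref{locally profinite} at limit steps. You merely spell out the bookkeeping (closedness, finite generation, containment in $U$, and $\beta<\alpha$) more explicitly than the paper does.
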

\begin{proof}\label{main prop}
	We prove it by transfinite induction. Let $\alpha$ be an ordinal.
	
	$\alpha=0$: That's the assumption.
	
	$\alpha=\beta+1$ is successor. By induction assumption every closed subgroup of $G_{\beta}$ satisfies $\mathcal{C}$. By Lemma \ref{locally_closed} every finite index subgroup $U$ of $G$ is locally closed and thus satisfies the conditions of Condition 2, so $\hat{U}$ satisfies $\mathcal{C}$. Thus by Condition 1 every closed subgroup of $G_{\alpha}=\hat{G_{\beta}}$ satisfies $\mathcal{C}$. 
	
	$\alpha$ is limit: Let $U$ be a finite index subgroup of $H_{\alpha}$. By Lemma \ref{locally profinite} every finitely generated subgroup of $U$ is contained in a closed subgroup of some $G_{\beta}$ for $\beta<\alpha$. By induction assumption these closed subgroup satisfy property $\mathcal{C}$. Thus $U$ satisfies the conditions of Condition 2 so $\hat{U}$ satisfies $\mathcal{C}$. Thus by Condition 1 every closed subgroup of $G_{\alpha}=\hat{H_{\alpha}}$ satisfies $\mathcal{C}$. 
\end{proof}
\begin{thm}
	Let $G$ be a nonstrongly complete profinite group which satisfies one of the following options:
	\begin{itemize}
		\item $G$ is projective.
		\item $G$ acts trivially on $\mathbb{F}_p$ and every closed subgroup of $G$ satisfies the $n$- vanishing Massey product property.
		\item $G$ has an orientation that induces the trivial module structure on $\mathbb{F}_p$ and every closed subgroup of $G$ is cyclotomic with regard to this orientation.
		\item $G$ act trivially on $\mathbb{F}_p$ and every closed subgroup of $G$ satisfies the cup-product exact sequence property.
	\end{itemize}
Then the same holds for $G_{\alpha}$, correspondingly, for every ordinal $\alpha$.
\end{thm}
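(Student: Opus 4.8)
The whole statement is designed to be a direct application of Proposition~\ref{main prop}. The plan is: for each of the four properties, exhibit a property $\mathcal{C}$ of profinite groups (for the last three, of profinite groups carrying the relevant extra datum --- a trivial action on $\mathbb{F}_p$, resp.\ an orientation whose induced action on $\mathbb{F}_p$ is trivial) such that the hypothesis of the theorem is exactly ``every closed subgroup of $G$ satisfies $\mathcal{C}$'', the desired conclusion is exactly ``every closed subgroup of $G_\alpha$ satisfies $\mathcal{C}$'', and $\mathcal{C}$ meets Conditions~1 and~2 of Proposition~\ref{main prop}; then that Proposition produces the conclusion by transfinite induction, handling successor stages through Lemma~\ref{locally_closed} and limit stages through Lemma~\ref{locally profinite}. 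A preliminary point is that the extra datum propagates up the tower: by the universal property an orientation (resp.\ an action on $\mathbb{F}_p$) lifts uniquely along every profinite completion, and at a limit ordinal $\alpha$ it is carried through ${\dirlim}_{\beta<\alpha}G_\beta$ and $H_\alpha$; triviality of the induced action on $\mathbb{F}_p$ persists because the image of the dense subgroup lies in the (closed) pointwise stabilizer --- this is the Observation used just before the proof of Theorem~\ref{Main theorem 2}.

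Take $\mathcal{C}$ to be, respectively, ``is projective'', ``satisfies the $n$-vanishing Massey product property'', ``is cyclotomic'', ``satisfies the CPESP''. Condition~1 of Proposition~\ref{main prop} --- for every profinite group, all closed subgroups satisfy $\mathcal{C}$ iff all open subgroups do --- holds in each case. The implication from closed to open is trivial since open subgroups are closed; the reverse implication is, for the three cohomological properties, exactly the ``it suffices to check open subgroups'' lemmas of the corresponding subsections, each of which pushes a continuous homomorphism or cohomology class on a closed subgroup up to an open subgroup by Lemma~\ref{from_closed_to_open}; and for projectivity it is the combination of the fact that a profinite group is an open (index-one) subgroup of itself with the closed-heredity of projectivity.

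Condition~2 of Proposition~\ref{main prop} --- if every finitely generated abstract subgroup of an abstract group $U$ lies inside a finitely generated profinite group satisfying $\mathcal{C}$, then $\hat U$ satisfies $\mathcal{C}$ --- is precisely what the proofs of Theorems~\ref{main theorm 1}, \ref{Main theorem 2} and \ref{Mian_theorem_3} establish in the last three cases: take a finite-index $U$, restrict the relevant datum on $\hat U$ (a twisted solution; an origin in $H^1$; a corestriction or a cup-decomposition) to the finitely generated closed subgroups $H$, make these restrictions continuous by Nikolov--Segal, solve on each $H$ using the hypothesis on $G$, glue the solutions (finitely many over each $H$) into a coherent family by the inverse-limit argument of Proposition~\ref{key tool} to get an abstract datum on $U$, and lift it to a continuous datum on $\hat U$ using density of $U$ and the Hausdorff target. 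For projectivity, Condition~2 is the main theorem of \cite{bar-on_2021}; if one prefers it in this exact local-closedness form, the same scheme works with ``datum'' $=$ weak solution of a finite embedding problem, the point being that a restriction of a weak solution is again a weak solution, so the sets of weak solutions over the finitely generated closed subgroups form an inverse system of nonempty finite sets.

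With Conditions~1 and~2 in hand for each $\mathcal{C}$, Proposition~\ref{main prop} applies and yields the theorem. I expect the only real friction to be in the projective case: extracting Condition~2 from \cite{bar-on_2021} in the form needed here (its statement concerns profinite projective groups rather than abstract locally closed groups), and in particular checking that the weak solutions over the finitely generated closed subgroups can be organized into an inverse system of nonempty finite sets; and, more bookkeeping than obstacle, tracking the orientations and $\mathbb{F}_p$-actions coherently through the limit stages of the tower. Everything else is just repackaging the lemmas and main theorems of the preceding subsections.
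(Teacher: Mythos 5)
Your proposal matches the paper's own proof essentially verbatim: the paper also deduces the theorem directly from Proposition~\ref{main prop} together with the proofs of Theorems~\ref{main theorm 1}, \ref{Main theorem 2}, \ref{Mian_theorem_3} and the main theorem of \cite{bar-on_2021}, adding only the remark that the orientation lifts up the tower via the universal properties of the profinite completion and of the direct limit. Your extra caution about extracting Condition~2 for the projective case from \cite{bar-on_2021} is reasonable, but the paper simply cites that result and does not elaborate further.
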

\begin{proof}
	This is an immediate consequence of Proposition \ref{main prop} and the proofs of Theorems \ref{main theorm 1}, \ref{Main theorem 2}, \ref{Mian_theorem_3} and the main theorem of \cite{bar-on_2021}. The only thing left to show is that the orientation of $G$ can be lifted to an orientation for every $G_{\alpha}$ in the chain, but this is also immediate since by the definition of profinite completion every homomorphism to a profinite group can be lifted to a compatible homomorphism from the completion, and compatible homomorphism induces a homomorhism from the direct limit, by the universal property of direct limit.    
\end{proof}

	\bibliographystyle{plain}
	\bibliography{../../Results_for_PHD/references}
\end{document}